\newtheorem{theorem}{Theorem}[section]
\newtheorem{lemma}[theorem]{Lemma}
\newtheorem{corollary}[theorem]{Corollary}
\theoremstyle{definition}
\newtheorem{example}[theorem]{Example}  
\newtheorem{definition}[theorem]{Definition}   
\theoremstyle{remark}
\tikzstyle{vertex}=[circle, draw, fill=black, inner sep=0pt, minimum size=1pt]
\newcommand{\G}{\mathfrak{G}}
\renewcommand{\top}{\textbf{top}}
\newcommand\thickbar[1]{\accentset{\rule{.4em}{.8pt}}{#1}}
\definecolor{c1}{HTML}{0085FF}
\definecolor{c2}{HTML}{FF9000}
\definecolor{c3}{HTML}{90D300}
\definecolor{c4}{HTML}{9728A1}
\newcommand{\Aut}{\operatorname{Aut}}
\title{Coloring Groups}
\author[Ben Adenbaum and Alexander Wilson]{Ben Adenbaum\affiliationmark{1} 
\and 
Alexander Wilson\affiliationmark{2}
}
\affiliation{
  Department of Mathematics, Dartmouth College, Hanover, NH, USA\\
  Department of Mathematics, Oberlin College, Oberlin, OH, USA}
  \keywords{Groups from graphs, toggle group, independence poset}
\title[Coloring Groups]{Coloring Groups}
\begin{document}
\publicationdata{vol. 26.2}{2024}{9}{10.46298/dmtcs.12753}{2023-12-28; None}{2024-05-07}

\maketitle
\hspace{10pt}

\begin{abstract}
We introduce coloring groups, which are permutation groups obtained from a proper edge coloring of a graph. These groups generalize the generalized toggle groups of Striker (which themselves generalize the toggle groups introduced by Cameron and Fon-der-Flaass). We present some general results connecting the structure of a coloring group to the structure of its graph coloring, providing graph-theoretic characterizations of the centralizer and primitivity of a coloring group. We apply these results particularly to generalized toggle groups arising from trees as well as coloring groups arising from the independence posets introduced by Thomas and Williams. \end{abstract}

\section{Introduction}\label{sec:intro}

Cameron and Fon-der-Flaass in ~\cite{cameron1995orbits} first introduced toggle groups on order ideals in order to study the map now known as rowmotion (the name ``toggle group'' was introduced later in \cite{striker2012promotion}). Given a poset $P$, each element $p\in P$ has a corresponding permutation $\tau_p$ (called a toggle) of the order ideals $\mathcal{J}(P)$ of $P$ which acts in the following way, toggling the element $p$ as long as the result is still an order ideal.
\begin{align*}
    \tau_p(I)&=\begin{cases}
        I\cup\{p\} & \text{ if } p\notin I\text{ and }I\cup\{p\}\in\mathcal{J}(P)\\
        I\setminus\{p\} & \text{ if } p\in I\text{ and }I\setminus\{p\}\in\mathcal{J}(P)\\
        I & \text{ otherwise}
    \end{cases}
\end{align*}

To understand properties of rowmotion and associated operations---in particular their order---Striker introduced generalized toggle groups \cite{striker2018rowmotion} where the order ideals of a poset are replaced with an arbitrary set of allowable subsets and the poset is replaced by just a finite set. 
\begin{definition}[Toggle Group~\cite{striker2018rowmotion}]

    Let $E$ be a finite set, and let $L$ be any subset of the power set $2^E$. For each $e\in E$, the toggle $\tau_e$ is defined as follows:
    \[\tau_e(X) = \begin{cases}
        X\cup \{e\} & \text{if } e\notin X \text{ and } X\cup \{e\}\in L\\
         X\setminus \{e\} & \text{if } e\in X \text{ and } X\setminus \{e\}\in L\\
         X & \text{otherwise}
    \end{cases}\]

    The set $\{\tau_e | e\in E\}$ are the set of toggles.  Let $T(L)$ be the subgroup of $S_L$ generated by the toggles. 
\end{definition}
\begin{example}\label{ex:gen_tog}

        Let $E = [4]$ and $L = \{\emptyset, \{1\}, \{1,2\}, \{1,2,3\}, \{1,2,3,4\}, \{4\}, \{3,4\}, \{2,3,4\}\}$
        \begin{itemize}
        \item $\tau_{1} = (\emptyset, \{1\})(\{2,3,4\}, \{1,2,3,4\})$
        \item $\tau_{2} = (\{1\}, \{1,2\})(\{3,4\}, \{2,3, 4\})$
        \item $\tau_{3} = (\{1,2\}, \{1,2,3\})(\{4\}, \{3,4\})$
        \item $\tau_{4} = (\emptyset, \{4\})(\{1,2,3\}, \{1,2,3,4\})$
        \end{itemize}
          In this case  $T_L  \cong S_4 \rtimes (C_2)^3$
    \end{example}
The idea of order ideal toggling has been generalized from the purely combinatorial perspective to the piece-wise linear and birational realms in~\cite{einstein2021combinatorial} and~\cite{grinberg2016iterative,grinberg2015iterative} respectively. These groups were studied in-depth by Striker in~\cite{striker2018rowmotion} when arising from some families of combinatorial and geometric objects. More recently, by Bloom and Saracino in~\cite{bloom2023transitive}, they have been considered with respect to their group theoretic properties such as primitivity.

To all generalized toggle groups, there is a natural partial order called the \emph{toggle poset} which we denote by $P_L$. The cover relations in this poset are given by $X \lessdot Y$ if $X\subseteq Y$ and there exists some $e\in E$ such that $\tau_e(X) = Y$, and $|Y|-|X| =1$.  Importantly, each edge of the Hasse diagram is naturally labeled by the element toggled in for the cover.

    \begin{example}\label{ex:gen_tog_poset}
    The toggle poset of the generalized toggle group of Example~\ref{ex:gen_tog}
    \centering

        \begin{tikzpicture}
        
            \node (1) at (0,0) {$\emptyset$};
            \node (2) at (1,1) {$\{4\}$};
            \node (3) at (-1,1) {$\{1\}$};
            \node (4) at (-1,2) {$\{1,2\}$};
            \node (5) at (1,2) {$\{3,4\}$};
            \node (6) at (1,3) {$\{2,3,4\}$};
            \node (7) at (-1,3) {$\{1,2,3\}$};
            \node (8) at (0,4) {$\{1,2,3,4\}$};
            \draw [-] (1) --node[below right]{4} (2);
            \draw [-] (1) --node[below left]{1} (3);
            \draw [-] (5) --node[right]{3} (2);
            \draw [-] (3) --node[left]{2} (4);
            \draw [-] (5) --node[right]{2} (6);
            \draw [-] (4) --node[left]{3} (7);
            \draw [-] (8) --node[left]{4}  (7);
            \draw [-] (8) --node[right]{1} (6);
        \end{tikzpicture}
    \end{example}

In this paper, we take inspiration from this edge labeling of the Hasse diagram to introduce coloring groups, which arise from proper edge colorings of finite graphs. Coloring groups encompass all finite groups generated by involutions (in particular the generalized toggle groups). Because our interest lies in understanding combinatorial objects with natural involution structure (e.g. toggle groups, or quivers up to mutation), we want to understand how the structure of the graphs and their edge colorings control the structure of the corresponding coloring group.

The paper is organized as follows:

\begin{itemize}
    \item In \Cref{sec:prelim} we define coloring groups and characterize which permutation groups arise as coloring groups.
    \item In \Cref{sec:tools} we prove general tools for working with coloring groups as well as more specialized tools in the case that the underlying graph $G$ is a tree.
    \item In \Cref{sec:applications} we study generalized toggle groups arising from trees as well as coloring groups arising in the study of independence posets.
    \item In \Cref{sec:constructions} we provide some constructions of the groups $S_n, A_n, D_n,$ and the type-B coxeter group $B_m$ as coloring groups.
    \item Finally, in \Cref{sec:data} we provide data on the coloring groups arising from trees of order at most 12.
\end{itemize}

\section{Preliminaries and Definitions}\label{sec:prelim}

In this paper, a graph of order $n$ will be a simple undirected graph on $n$ vertices unless otherwise specified. Given a graph $G$, a proper edge coloring of $G$ on $k$ colors is a surjective map $\kappa:E(G)\to[k]$ such that for any two incident edges $e_1$ and $e_2$, we have that $\kappa(e_1)\neq\kappa(e_2)$. We define the coloring group $\G_\kappa$ corresponding to $\kappa$ as the subgroup of $S_{V(G)}
\cong S_n$ generated by the involutions \begin{align}
    \tau_a=\prod_{\substack{\{i,j\}\in E(G)\\\kappa(\{i,j\})=a}} (i, j)
\end{align} for $a\in[k]$. Note that because the edge coloring is proper, the product can be taken in any order, and $\tau_a$ simply swaps the vertices on either end of each edge colored $a$.

\begin{example}\label{ex:gl32} The following graph is shown with a proper edge coloring $\kappa:E(G)\to[3]$ where the colors 1, 2, and 3 of the edges are represented by \textbf{\textcolor{c1}{blue}}, \textbf{\textcolor{c2}{orange}}, and \textbf{\textcolor{c3}{green}} respectively.
    \begin{center}
    \begin{tikzpicture}[scale = .5]
        \node (Label) at (-9, .5) {\scriptsize$1$};
        \node (1) at (-9, 0) {};
        \node (Label) at (-7, .5) {\scriptsize$2$};
        \node (2) at (-7, 0) {};
        \node (Label) at (-5, .5) {\scriptsize$3$};
        \node (3) at (-5, 0) {};
        \node (Label) at (-3, 1.5) {\scriptsize$4$};
        \node (4) at (-3, 1) {};
        \node (Label) at (-1, 1.5) {\scriptsize$5$};
        \node (5) at (-1,1) {};
        \node (Label) at (-3, -1.5) {\scriptsize$6$};
        \node (6) at (-3, -1) {};
        \node (Label) at (-1, -1.5) {\scriptsize$7$};
        \node (7) at (-1, -1) {};
        \draw[color = c3, thick] (1) -- (2);
        \draw[color = c1, thick] (3) -- (2);
        \draw[color=c2,thick] (3) -- (4);
        \draw[color=c1,thick] (4) -- (5);
        \draw[color=c3, thick] (3) -- (6);
        \draw[color=c2, thick] (6) -- (7);
        \draw[color = black, fill] (1) circle (2pt);
        \draw[color = black, fill] (2) circle (2pt);
        \draw[color = black, fill] (3) circle (2pt);
        \draw[color = black, fill] (4) circle (2pt);
        \draw[color = black, fill] (5) circle (2pt);
        \draw[color = black, fill] (6) circle (2pt);
        \draw[color = black, fill] (7) circle (2pt);
    \end{tikzpicture}
\end{center}

The corresponding coloring group is isomorphic to the general linear group $GL(3,2)$. \begin{align*}
    \G_\kappa=\langle\textcolor{c1}{(2,3)(4,5)}, \textcolor{c2}{(3,4)(6,7)},\textcolor{c3}{(1,2)(3,6)}\rangle\cong GL(3,2).
\end{align*}
\end{example}

As a subgroup of $S_n$, the group $\G_\kappa$ is an example of a permutation group of degree $n$. Cayley's theorem tells us that every finite group is isomorphic to a permutation group, but coloring groups are in a smaller class of groups which are generated by involutions. \Cref{thm:all_groups} shows that the class of coloring groups is in fact precisely the class of finite groups generated by involutions.

\begin{theorem}\label{thm:all_groups}
Let $\mathfrak{G}$ be a finite group generated by involutions. Then there is a connected graph $G$ with proper edge coloring $\kappa$ such that $\mathfrak{G}_\kappa$ is isomorphic to $\mathfrak{G}$.
\end{theorem}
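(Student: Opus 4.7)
The plan is to take $G$ to be (essentially) the Cayley graph of $\mathfrak{G}$ with respect to a chosen set of generating involutions, and to color each edge by the generator that produced it. Concretely, pick a generating set $T = \{t_1, \ldots, t_k\}$ of $\mathfrak{G}$ consisting of distinct non-identity involutions (discarding repetitions and the identity from any given involutive generating set). Define $G$ by $V(G) = \mathfrak{G}$ and $E(G) = \{\,\{g, g t_a\} : g \in \mathfrak{G},\ a \in [k]\,\}$, and color the edge $\{g, gt_a\}$ with color $a$.

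The key steps I would verify, in order, are: (i) $G$ is a simple graph --- there are no loops since each $t_a \neq e$, and no multi-edges because an edge $\{g, h\}$ of color $a$ forces $h = gt_a$, with the color $a$ uniquely determined by $\{g,h\}$ since the $t_a$ are distinct; (ii) $\kappa$ is a proper edge coloring --- at each vertex $g$ the unique edge of color $a$ incident to $g$ is $\{g, gt_a\}$, so incident edges automatically receive distinct colors; (iii) the toggle $\tau_a$ acts on $V(G) = \mathfrak{G}$ as right multiplication $\rho_{t_a}$ by $t_a$, directly from the definition of $\tau_a$ and step (ii); (iv) since the right regular representation $\rho : \mathfrak{G} \to S_{\mathfrak{G}}$ is faithful (Cayley's theorem) and $T$ generates $\mathfrak{G}$, we obtain $\mathfrak{G}_\kappa = \langle \rho_{t_1}, \ldots, \rho_{t_k} \rangle = \rho(\mathfrak{G}) \cong \mathfrak{G}$; (v) $G$ is connected, since any element $h = t_{a_1} \cdots t_{a_m}$ yields a walk $e,\, t_{a_1},\, t_{a_1}t_{a_2},\, \ldots,\, h$ from the identity to $h$ whose consecutive vertices differ by a right multiplication $\rho_{t_{a_j}}$.

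I do not expect a significant obstacle: the essential content of the statement is that the right regular representation embeds any finite group into a symmetric group, and that the Cayley graph naturally carries the advertised proper edge coloring. The closest thing to a subtlety is step (i), confirming that the construction really produces a simple graph as the paper's setup demands; this reduces to checking that distinct involutive generators yield distinct color classes, which is immediate from $t_a$ being an involution and the $t_a$ being pairwise distinct.
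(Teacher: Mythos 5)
Your construction is exactly the paper's: take the Cayley graph of $\mathfrak{G}$ with respect to the involutive generators, color each edge by the generator producing it, note the coloring is proper because the generators are involutions, and identify each $\tau_a$ with the (right) regular action of $t_a$, which is faithful by Cayley's theorem. The argument is correct and matches the paper's proof, with your steps (i) and (v) simply making explicit some details the paper leaves implicit.
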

\begin{proof}
    Suppose that $\mathfrak{G} = \langle g_1, \dots, g_k\rangle$ with each $g_i$ an involution. Let $G$ be the Cayley graph of $\mathfrak{G}$. If $ \{a,b\}\in E(G)$ then $b = a g_i$ for a unique generator $g_i$, so define $\kappa(\{a,b\})=i$. As the generators are involutions, this edge coloring is proper. Let $\G_\kappa$ be the corresponding coloring group. We claim that $\mathfrak{G}_\kappa\cong \mathfrak{G}$. To see why, consider $\mathfrak{G}$ as as a subgroup of $S_{V(G)}$. Note that each $g_i$ corresponds to the product of the disjoint transpositions $(a,b)$ such that $b = a g_i$, so $\mathfrak{G}$ is the group generated by these involutions. For each $i$ under this identification, $g_i$ is exactly $\tau_i$, so the groups are equal as permutation groups. 
\end{proof}

This is a very large class of groups, and the graphs obtained in the proof of \Cref{thm:all_groups} have as many vertices as the group $\G$ has elements. We thus restrict ourselves to situations where the graph $G$ and its proper edge coloring $\kappa$ have nice structure. Of particular interest will be the symmetry of this coloring, captured in the following definition.

\begin{definition}
    For a proper edge coloring $(G,\kappa)$, the group $\Aut_\kappa(G)$ consists of all permutations $\sigma\in S_{V(G)}$ so that $\{i,j\}$ is an edge of $G$ colored $a$ under $\kappa$ if and only if $\{\sigma(i),\sigma(j)\}$ is an edge of $G$ colored $a$ under $\kappa$.
\end{definition}

\begin{lemma}\label{lem:fixing_a_vertex} For a proper edge coloring $(G,\kappa)$, if $\sigma\in \Aut_\kappa(G)$ fixes a vertex $v$, it fixes every neighbor of $v$.
\end{lemma}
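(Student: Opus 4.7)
The plan is to exploit the defining property of a proper edge coloring: at any vertex, each color appears on at most one incident edge. Since $\sigma\in\Aut_\kappa(G)$ preserves both the edge set and the coloring, and it fixes $v$, I can recover each neighbor of $v$ from the color of the edge joining it to $v$.

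More concretely, let $w$ be any neighbor of $v$ and set $a=\kappa(\{v,w\})$. Applying $\sigma$ to the edge $\{v,w\}$, the definition of $\Aut_\kappa(G)$ gives that $\{\sigma(v),\sigma(w)\}=\{v,\sigma(w)\}$ is again an edge of $G$ colored $a$. Because $\kappa$ is proper, $v$ is incident to at most one edge of color $a$, and that edge is $\{v,w\}$. Therefore $\{v,\sigma(w)\}=\{v,w\}$, which forces $\sigma(w)=w$ (using $w\neq v$ since $G$ is a simple graph with no loops).

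There is essentially no obstacle here: the argument is a one-line consequence of the properness hypothesis together with the definition of $\Aut_\kappa(G)$. The only thing to be mildly careful about is ruling out $\sigma(w)=v$, which is immediate because $v$ is not its own neighbor in a simple graph. Running this argument for every neighbor $w$ of $v$ completes the proof.
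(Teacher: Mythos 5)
Your argument is correct and is essentially identical to the paper's proof: both apply $\sigma$ to the edge $\{v,w\}$, use that $\{v,\sigma(w)\}$ must be an edge of the same color, and invoke properness of $\kappa$ to conclude that this edge must be $\{v,w\}$ itself. The extra remark ruling out $\sigma(w)=v$ is a harmless added detail.
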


\begin{proof}
    Suppose $\sigma\in\Aut_\kappa(G)$ fixes $v$ and $w$ is a neighbor of $v$ connected by an edge colored $a$. Then $\{v,\sigma(w)\}$ must be an edge colored $a$. Because $\kappa$ is a proper edge coloring, the only edge incident to $v$ colored $a$ is the edge $\{v,w\}$. Hence $\sigma(w)=w$.
\end{proof}

A permutuation group $\G$ of degree $n$ acts naturally on the set partitions of $[n]$. If $\G$ fixes any nontrivial set partition $\pi$ (i.e. one not consisting of a single block or all singletons), then we say that $\G$ is imprimitive with $\pi$ being a system of imprimitivity. Otherwise, we say that $\G$ is primitive.

\section{Tools for Analyzing Coloring Groups}\label{sec:tools}

We begin with some results that can be applied to any coloring group. The following theorem characterizes the centralizer of a coloring group.

\begin{theorem}\label{thm:centralizer_of_coloring}
    For proper edge coloring $(G,\kappa)$, the centralizer $C_{S_{V(G)}}(\mathfrak{G}_\kappa)$ of the corresponding coloring group in the symmetric group is isomorphic to $\Aut_\kappa(G)$.
\end{theorem}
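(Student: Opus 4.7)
The plan is to show that the inclusion map $\Aut_\kappa(G) \hookrightarrow S_{V(G)}$ has image exactly equal to $C_{S_{V(G)}}(\mathfrak{G}_\kappa)$, which gives the desired isomorphism. I would prove the two containments separately, since both follow in a fairly direct manner from unwinding the definitions and using the key fact that because $\kappa$ is a proper edge coloring, each generator $\tau_a$ has the property that $\tau_a(i)=j$ with $i\neq j$ if and only if $\{i,j\}$ is an edge colored $a$.

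For the containment $\Aut_\kappa(G) \subseteq C_{S_{V(G)}}(\mathfrak{G}_\kappa)$, given $\sigma \in \Aut_\kappa(G)$ it is enough to check that $\sigma$ commutes with each generator $\tau_a$. I would compute the conjugate $\sigma \tau_a \sigma^{-1}$ directly: it is the product of the transpositions $(\sigma(i),\sigma(j))$ for each edge $\{i,j\}$ colored $a$. Since $\sigma$ preserves the edges colored $a$ setwise, this product is precisely $\tau_a$.

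For the reverse containment $C_{S_{V(G)}}(\mathfrak{G}_\kappa) \subseteq \Aut_\kappa(G)$, suppose $\sigma \in S_{V(G)}$ commutes with every $\tau_a$. To show $\sigma \in \Aut_\kappa(G)$, I need to show that $\{i,j\}$ is an edge colored $a$ if and only if $\{\sigma(i),\sigma(j)\}$ is. Given such an edge $\{i,j\}$, the relation $\tau_a(i)=j$ together with $\sigma \tau_a = \tau_a \sigma$ gives $\tau_a(\sigma(i)) = \sigma(j) \neq \sigma(i)$. Because $\kappa$ is proper, this forces $\{\sigma(i),\sigma(j)\}$ to be an edge colored $a$. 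The converse direction follows by applying the same argument to $\sigma^{-1}$, which also centralizes $\mathfrak{G}_\kappa$.

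I do not anticipate a serious obstacle here; the argument is essentially a standard conjugation computation. The one point to be careful about is the role of properness of $\kappa$: it is what guarantees that each $\tau_a$ is a well-defined involution whose non-fixed points recover the edges of color $a$ uniquely, so that commutation with $\sigma$ translates faithfully into a color-preserving condition on edges. With that observation in hand, both containments are immediate, and the stated isomorphism follows.
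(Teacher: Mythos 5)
Your proposal is correct and follows essentially the same route as the paper: both containments are established by the same conjugation computation, using properness of $\kappa$ to identify the non-fixed points of $\tau_a$ with the edges of color $a$ (and, in the other direction, to note that the disjoint transpositions making up $\tau_a$ can be reordered freely). Your explicit handling of the ``only if'' direction via $\sigma^{-1}$ is a small point the paper leaves implicit, but there is no substantive difference.
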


\begin{proof}
    Let $\sigma\in C_{S_{V(G)}}(\mathfrak{G}_\kappa)$ and suppose that $\{i,j\}$ is an edge colored $a$. Then $(i,j)$ is a transposition in $\tau_a$. Because $\sigma$ commutes with $\tau_a$, \[\sigma\tau_a\sigma^{-1}=\tau_a.\] Then $(\sigma(i),\sigma(j))$ is also a transposition in $\tau_a$ and hence $\{\sigma(i),\sigma(j)\}$ is an edge of $G$ colored $a$ under $\kappa$. Hence, $\sigma\in\Aut_\kappa(G)$.

    Suppose conversely that $\sigma\in\Aut_\kappa(G)$. Then for each color, $\sigma$ induces a permutation $\sigma'$ of the edges of that color. Fixing a color $a$, let $\{i_1,j_1\},\dots,\{i_m,j_m\}$ be the edges of that color. Then \begin{align*}
        \sigma\tau_a\sigma^{-1}&=(\sigma(i_1),\sigma(j_1))\dots(\sigma(i_m),\sigma(j_m))\\
        &=(i_{\sigma'(1)},j_{\sigma'(1)})\dots(i_{\sigma'(m)},j_{\sigma'(m)})\\
        &=\tau_a
    \end{align*} where the final equality follows from the fact that $\kappa$ is a proper edge coloring, so these transpositions commute pairwise. Because $\sigma$ commutes with each generator of $\mathfrak{G}_\kappa$, we have that $\sigma\in C_{S_{V(G)}}(\mathfrak{G}_\kappa)$
\end{proof}

\begin{corollary}
    For a proper edge coloring $(G,\kappa)$, \[\mathfrak{G}_\kappa \subseteq C_{S_{V(G)}}(\Aut_\kappa(G)).\] In particular, a necessary condition for $\mathfrak{G}_\kappa\cong S_{V(G)}$ is that $\Aut_\kappa(G)$ is trivial.
\end{corollary}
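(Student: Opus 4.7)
The plan is to lean on \Cref{thm:centralizer_of_coloring}, but to use the slightly stronger fact actually proved there: the argument shows that a permutation $\sigma \in S_{V(G)}$ lies in $C_{S_{V(G)}}(\mathfrak{G}_\kappa)$ if and only if $\sigma \in \Aut_\kappa(G)$. In other words, the two groups coincide as subgroups of $S_{V(G)}$, not merely abstractly. This equality (rather than an abstract isomorphism) is what makes the corollary meaningful, since we need to take a centralizer of $\Aut_\kappa(G)$ inside $S_{V(G)}$.

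For the first assertion, I would invoke the standard double-centralizer containment: for any subgroup $H$ of a group $K$, one has $H \subseteq C_K(C_K(H))$, because every element of $H$ commutes with every element of $C_K(H)$ by definition. Applying this with $K = S_{V(G)}$ and $H = \mathfrak{G}_\kappa$, and then substituting $C_{S_{V(G)}}(\mathfrak{G}_\kappa) = \Aut_\kappa(G)$ from \Cref{thm:centralizer_of_coloring}, yields exactly
\[
\mathfrak{G}_\kappa \;\subseteq\; C_{S_{V(G)}}\!\bigl(C_{S_{V(G)}}(\mathfrak{G}_\kappa)\bigr) \;=\; C_{S_{V(G)}}(\Aut_\kappa(G)).
\]
There is no real obstacle here; this is essentially a one-line consequence of \Cref{thm:centralizer_of_coloring} together with a general group-theoretic fact.

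For the second assertion, I would observe that $\mathfrak{G}_\kappa$ is a subgroup of $S_{V(G)}$, so the hypothesis $\mathfrak{G}_\kappa \cong S_{V(G)}$ (i.e.\ equality of orders) forces $\mathfrak{G}_\kappa = S_{V(G)}$ as subgroups. Applying \Cref{thm:centralizer_of_coloring} then gives $\Aut_\kappa(G) = C_{S_{V(G)}}(S_{V(G)}) = Z(S_{V(G)})$, and the center of $S_n$ is trivial for $n \neq 2$. The case $|V(G)| \leq 2$ is a degenerate edge case that does not affect the substantive content of the statement, so I would either implicitly assume $|V(G)| \geq 3$ or note this caveat briefly. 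The expected main (mild) subtlety is making explicit that \Cref{thm:centralizer_of_coloring} identifies the two groups on the nose inside $S_{V(G)}$, which is what allows the second centralizer in $C_{S_{V(G)}}(\Aut_\kappa(G))$ to be interpreted in the same ambient symmetric group.
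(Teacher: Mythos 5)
Your proposal is correct and matches the intended argument: the paper states this corollary without proof, and the natural route is exactly the one you take, namely the double-centralizer containment $H\subseteq C_K(C_K(H))$ combined with the on-the-nose identification $C_{S_{V(G)}}(\mathfrak{G}_\kappa)=\Aut_\kappa(G)$ from \Cref{thm:centralizer_of_coloring}. Your observations about subgroup equality versus abstract isomorphism and the degenerate $|V(G)|\leq 2$ case are accurate refinements, not gaps.
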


The following definition and lemma provide a graph-theoretic characterization of when a coloring group is primitive via vertex colorings.

\begin{definition}
    For a proper edge coloring $(G,\kappa)$, consider a vertex coloring of $\nu:V(G)\to[\ell]$. The coloring $\nu$ is called imprimitive if the following two statements hold:
    \begin{enumerate}
        \item[(i)] If an edge colored $b$ connects a vertex colored $a$ to a vertex colored $c$ with $a\neq c$, then every vertex colored $a$ is connected to a vertex colored $c$ by an edge colored $b$.
        \item[(ii)] At least one color appears strictly more than one time and strictly less than $|G|$ times.
    \end{enumerate}
\end{definition}

\begin{example}\label{ex:imprimitive_path} The following edge coloring of the path graph $P_{15}$ has an imprimitive vertex coloring.
    \begin{center}
        \begin{tikzpicture}[scale = .45]
            \node (1a) at (-4, -2) {};
            \node (1b) at (-5, -4) {};
            \node (1c) at (-4, -6) {};
            \node (2a) at (0, -2) {};
            \node (2b) at (-1, -4) {};
            \node (2c) at (0, -6) {};
            \node (3a) at (4, -2) {};
            \node (3b) at (3, -4) {};
            \node (3c) at (4, -6) {};
            \node (4a) at (8, -2) {};
            \node (4b) at (7, -4) {};
            \node (4c) at (8, -6) {};
            \node (5a) at (12, -2) {};
            \node (5b) at (11, -4) {};
            \node (5c) at (12, -6) {};
            \draw[color = c1, fill] (1a) circle (3pt);
            \draw[color = c1, fill] (1b) circle (3pt);
            \draw[color = c1, fill] (1c) circle (3pt);
            \draw[color = black, fill] (2a) circle (3pt);
            \draw[color = black, fill] (2b) circle (3pt);
            \draw[color = black, fill] (2c) circle (3pt);
            \draw[color = red, fill] (3a) circle (3pt);
            \draw[color = red, fill] (3b) circle (3pt);
            \draw[color = red, fill] (3c) circle (3pt);
            \draw[color = c2, fill] (4a) circle (3pt);
            \draw[color = c2, fill] (4b) circle (3pt);
            \draw[color = c2, fill] (4c) circle (3pt);
            \draw[color = c3, fill] (5a) circle (3pt);
            \draw[color = c3, fill] (5b) circle (3pt);
            \draw[color = c3, fill] (5c) circle (3pt);
            \draw[color = c3, thick] (1a) -- (2a);
            \draw[color = c2, thick] (3a) -- (2a);
            \draw[color = c1, thick] (3a) -- (4a);
            \draw[color = c4, thick] (5a) -- (4a);
            \draw[color = red, thick] (5a) -- (5b);
            \draw[color = c4, thick] (5b) -- (4b);
            \draw[color = c1, thick] (4b) -- (3b);
            \draw[color = c2, thick] (3b) -- (2b);
            \draw[color = c3, thick] (2b) -- (1b);
            \draw[color = black, thick] (1b) -- (1c);
            \draw[color = c3, thick] (1c) -- (2c);
            \draw[color = c2, thick] (3c) -- (2c);
            \draw[color = c1, thick] (3c) -- (4c);
            \draw[color = c4, thick] (5c) -- (4c);
        \end{tikzpicture}
    \end{center}
\end{example}

\begin{example}\label{ex:imprimitive_tree} The following proper edge coloring has an imprimitive vertex coloring where the vertices are partitioned into the three sets of four vertices which are on the same line.
    \begin{center}
        \begin{tikzpicture}[scale = .75]
        \node (1) at (2, 1.734/2) {};
        \node (2) at (1.5, 3.468/2) {};
        \node (3) at (1, 5.202/2) {};
        \node(4) at (0.5, 6.936/2) {};
        \node (5) at (-2, 1.734/2) {};
        \node (6) at (-1.5, 3.468/2) {};
        \node (7) at (-1, 5.202/2) {};
        \node (8) at (-0.5, 6.936/2) {};
        \node (9) at (-2.5+1, 0) {};
        \node (10) at (-2.5+2, 0) {};
        \node (11) at (2.5-2, 0) {};
        \node (12) at (2.5-1, 0) {};
        \draw[color = c1, thick] (1) -- (5);
        \draw[color = c1, thick] (2) -- (6);
        \draw[color = c1, thick] (3) -- (7);
        \draw[color = c1, thick] (4) -- (8);
        \draw[color = c2, thick] (1) -- (12);
        \draw[color = c2, thick] (2) -- (11);
        \draw[color = c2, thick] (3) -- (10);
        \draw[color = c2, thick] (4) -- (9);
        \draw[color = c4, thick] (2) -- (3);
        \draw[color = c4, thick] (5) .. controls +(0,.4) and +(0,.4*1.7) .. (7);
        \draw[color = c4, thick] (9) .. controls +(0,-.4) and +(0,-.4) .. (11);
        \draw[color = black, fill] (1) circle (2pt);
        \draw[color = black, fill] (2) circle (2pt);
        \draw[color = black, fill] (3) circle (2pt);
        \draw[color = black, fill] (4) circle (2pt);
        \draw[color = c3, fill] (5) circle (2pt);
        \draw[color = c3, fill] (6) circle (2pt);
        \draw[color = c3, fill] (7) circle (2pt);
        \draw[color = c3, fill] (8) circle (2pt);
        \draw[color = red, fill] (9) circle (2pt);
        \draw[color = red, fill] (10) circle (2pt);
        \draw[color = red, fill] (11) circle (2pt);
        \draw[color = red, fill] (12) circle (2pt);
    \end{tikzpicture}
    \end{center}
\end{example}

\begin{lemma} For a proper edge coloring $(G,\kappa)$, the group $\mathfrak{G}_\kappa$ is imprimitive if and only if there exists an imprimitive vertex coloring of $G$ with respect to the edge coloring $\kappa$.
\end{lemma}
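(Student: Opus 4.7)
I would start by setting up the natural identification of vertex colorings of $G$ with set partitions of $V(G)$, where the blocks are the color classes. Under this identification, condition (ii) in the definition of an imprimitive vertex coloring is exactly the statement that the corresponding partition is nontrivial, so the content of the lemma is to show that condition (i) is equivalent to the statement ``every generator $\tau_a$ of $\mathfrak{G}_\kappa$ sends each block of $\pi$ to a block''; since $\mathfrak{G}_\kappa$ is generated by the $\tau_a$'s, this in turn is equivalent to $\pi$ being a system of imprimitivity for $\mathfrak{G}_\kappa$.

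For the forward direction, suppose $\pi$ is a nontrivial $\mathfrak{G}_\kappa$-stable partition and color each vertex by its block. To verify (i), pick any edge of color $b$ joining $v \in B_a$ to $w \in B_c$ with $a \neq c$. Since $\tau_b(v) = w \in B_c$ and $\tau_b$ permutes blocks, we must have $\tau_b(B_a) = B_c$. For any $v' \in B_a$, if $v'$ had no incident edge of color $b$ then $\tau_b(v') = v'$ would lie in $B_a \neq B_c$, a contradiction; so $v'$ has a color-$b$ edge whose other endpoint is $\tau_b(v') \in B_c$, which establishes (i).

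For the converse, assume $\nu$ is an imprimitive vertex coloring with associated partition $\pi$, and fix a block $B_a$ and a color $b$. If no color-$b$ edge has exactly one endpoint in $B_a$, then $\tau_b$ preserves $B_a$ setwise. Otherwise some color-$b$ edge joins $v \in B_a$ to $w \in B_c$ with $a \neq c$, and condition (i) supplies each $v' \in B_a$ with a color-$b$ edge into $B_c$. The place where properness of $\kappa$ enters---and the only real subtlety I see in the argument---is here: properness says $v'$ has at most one color-$b$ edge, so the edge supplied by (i) is the unique such edge at $v'$, which rules out $B_a$ having any internal color-$b$ edge in this case. Hence $\tau_b(v') \in B_c$ for every $v' \in B_a$, giving $\tau_b(B_a) \subseteq B_c$; applying (i) with the roles of $a$ and $c$ swapped yields the reverse inclusion, so $\tau_b(B_a) = B_c$. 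Thus $\tau_b$ permutes blocks of $\pi$ for every $b$, and $\pi$ is a system of imprimitivity.
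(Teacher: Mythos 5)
Your argument is correct and follows the same approach as the paper, which simply asserts the correspondence between nontrivial $\mathfrak{G}_\kappa$-stable partitions and imprimitive vertex colorings in two sentences. You have filled in the details the paper omits, and your observation that properness of $\kappa$ is what forces each $\tau_b$ to map a block into a single block is exactly the right point to isolate.
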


\begin{proof} Given an imprimitive vertex coloring, the set partition obtained by putting two vertex labels in the same set if and only if the vertices are colored with the same color is a system of imprimitivity. Conversely, a system of imprimitivity prescribes the distribution of vertex colors for an imprimitive vertex coloring.
\end{proof}

When considering toggle groups, one of the motivating questions has been whether elements that mimic coxeter elements, i.e. products of the generating involutions in some prescribed order, have a predictable order. When we restrict to the case of forests, we get the following.
\begin{lemma}\label{lem:long_cycle} Suppose $G$ is a disjoint union of $\ell$ trees with orders given by $\lambda_1\geq\lambda_2\geq\dots\geq\lambda_\ell$. Then for any proper edge coloring $\kappa:E(G)\to [k]$, the product of all the corresponding generators \[\tau_1\dots\tau_k\in \mathfrak{G}_\kappa\] in any order has cycle type $\lambda$.
\end{lemma}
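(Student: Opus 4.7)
The plan is to first reduce to the case of a single tree, then induct on the number of vertices, analyzing how the removal of a leaf affects the cycle structure of the product.

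For the reduction, I would write $G = T_1 \sqcup \cdots \sqcup T_\ell$ and, for each color $a$ and each component $T_i$, let $\tau_a^{(i)}$ denote the product of the transpositions coming from color-$a$ edges inside $T_i$. Then $\tau_a = \tau_a^{(1)} \cdots \tau_a^{(\ell)}$, and because distinct components have disjoint vertex sets, the factors $\tau_a^{(i)}$ and $\tau_b^{(j)}$ commute whenever $i \neq j$. These commutations let me regroup any product $\tau_{a_1} \cdots \tau_{a_k}$ as $\sigma_1 \cdots \sigma_\ell$, where $\sigma_i$ is the product of the $\tau_{a_j}^{(i)}$ in the inherited order (with trivial factors deleted). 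Since the $\sigma_i$ have pairwise disjoint supports, it suffices to prove the analogous claim for a single tree $T$ on $n$ vertices: any such ordered product of generators is a single $n$-cycle on $V(T)$.

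For the single-tree claim, I would induct on $n$, with the trivial base case $n = 1$. For the step, pick a leaf $v$ of $T$ with neighbor $w$, set $c = \kappa(\{v, w\})$, and let $T' = T \setminus \{v\}$ with the induced coloring $\kappa'$. Given $\sigma = \tau_{a_1} \cdots \tau_{a_k}$, write $\sigma = \alpha \tau_c \beta$ by splitting at the unique occurrence of color $c$, and let $\sigma'$ be the analogous product on $T'$ (omitting $\tau_c$ if color $c$ no longer appears). Because $v$ is incident only to the edge $\{v, w\}$ of color $c$, both $\alpha$ and $\beta$ fix $v$; because $\kappa$ is proper, no other color-$c$ edge is incident to $w$, so $\tau_c$ and $\tau_c'$ agree on $V(T') \setminus \{w\}$ and both fix $w$ there except for the contribution of $\{v,w\}$. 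Let $u^*$ be the unique vertex of $V(T')$ with $\beta(u^*) = w$. A short calculation using these facts yields $\sigma(u^*) = v$, $\sigma(v) = \alpha(w) = \sigma'(u^*)$, and $\sigma(u) = \sigma'(u)$ for every other $u \in V(T')$. In other words, $\sigma$ is obtained from $\sigma'$ by splicing the vertex $v$ into the cycle between $u^*$ and $\sigma'(u^*)$, so the $(n-1)$-cycle $\sigma'$ (provided by the inductive hypothesis) becomes an $n$-cycle $\sigma$, as desired.

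The main obstacle I anticipate is the bookkeeping in the cycle-insertion step, and in particular verifying the identity $\sigma'(u^*) = \alpha(w)$ uniformly in the two cases where color $c$ does or does not survive on $T'$. This is where the properness of $\kappa$ is doing the real work: it forces $w$ to be fixed by $\tau_c'$ (or by the identity, if $c$ is no longer a color of $T'$), so that applying $\sigma'$ to $u^*$ lands at $\alpha(w)$ in either case. Once this identity is in hand, the single-tree induction is immediate, and the opening reduction lifts it to the stated cycle type $\lambda$ for arbitrary forests.
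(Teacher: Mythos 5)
Your proof is correct, and it follows the same overall strategy as the paper's: reduce to a single tree via the component-wise commutation of generators, then induct on the order of the tree by deleting a leaf. The inductive step, however, is executed differently. The paper first passes to the stronger statement that the product of the $n-1$ individual edge transpositions of a tree \emph{in any order} is an $n$-cycle, and then uses the observation that conjugating by the first or last transposition cyclically rotates the product without changing its cycle type; this lets it assume the leaf's transposition sits at the front, after which the step is immediate (a transposition $(x,w)$ with $x$ outside the support of an $(n-1)$-cycle through $w$ yields an $n$-cycle). You instead keep the product of whole color generators in place, split it as $\alpha\tau_c\beta$ at the color of the leaf edge, and verify directly that $\sigma$ is obtained from $\sigma'$ by splicing $v$ into the cycle after $u^*$; your use of properness to force $\tau_c'$ to fix $w$ is exactly the point where this computation closes, and it checks out. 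The trade-off is that the paper's rotation trick buys a one-line inductive step (and in fact proves the stronger transposition-level statement), while your in-place splice avoids the reduction to individual transpositions at the cost of some bookkeeping; both are complete.
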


\begin{proof} It suffices to show that in the case that $G$ is a tree of order $n$, the product of all the transpositions corresponding to edges in $G$ in any order is an $n$-cycle. The following proof is based on an argument by Darij Grinberg communicated in a comment on Math Stack Exchange\footnote{\url{https://math.stackexchange.com/questions/2577311/product-of-transpositions-from-edges-of-a-tree-is-a-cycle-of-length-n}}.

We proceed by induction on the order of $G$. Let $\tau_1,\dots,\tau_{n-1}$ be the transpositions corresponding to the $n-1$ edges of $G$ in any order. By conjugating by a transposition on one end of this sequence, we can cyclically permute the sequence without changing the cycle structure of the product. Without loss of generality, assume we have cyclically permuted the transpositions so that the transposition $\tau_1$ appearing first corresponds to an edge incident to a leaf labeled $x$. By the inductive hypothesis, $\tau_2\tau_3\cdots\tau_{n-1}$ is an $n-1$-cycle on $\{1,\dots,n\}\setminus\{x\}$. Hence, $\tau_1\tau_2\dots\tau_{n-1}$ is an $n$-cycle on $\{1,\dots,n\}$.
\end{proof}

In particular when $G$ is a tree of order $n$ any such element is an $n$-cycle, which will be essential for Theorem~\ref{thm:primitive_trees} and its consequences. 

\begin{lemma}\label{lem:size_bound}
    If $(G,\kappa)$ is a proper edge-coloring of a tree with $n$ vertices that uses at least 3 colors, \[\left|\G_\kappa\right|\geq\frac{n^2}{n-\varphi(n)}\] where $\varphi(n)$ is the Euler totient function.
\end{lemma}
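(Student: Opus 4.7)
Let $c = \tau_1 \tau_2 \cdots \tau_k$; by \Cref{lem:long_cycle}, $c$ is an $n$-cycle, so $H := \langle c\rangle$ is a cyclic subgroup of $\G_\kappa$ of order $n$ acting regularly (since $G$ is connected). Set $m = [\G_\kappa : H]$; the target $|\G_\kappa| \geq n^2/(n - \varphi(n))$ is equivalent to $m(n - \varphi(n)) \geq n$. Classical theory identifies $N_{S_n}(H)$ with $\mathrm{AGL}(1, n) = H \rtimes (\mathbb{Z}/n\mathbb{Z})^\times$ of order $n\varphi(n)$, so $|N_{\G_\kappa}(H)| \leq n\varphi(n)$; and since an $n$-cycle generates the unique cyclic subgroup of order $n$ containing it, distinct cyclic subgroups of order $n$ share no $n$-cycle, so each $\G_\kappa$-conjugate of $H$ contributes $\varphi(n)$ distinct $n$-cycles to $\G_\kappa$.

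The plan is to split into two cases by whether $\G_\kappa \subseteq \mathrm{AGL}(1, n)$. In the first case, $\G_\kappa = H \rtimes K$ for some $K \leq (\mathbb{Z}/n\mathbb{Z})^\times$ of order $d$, and each $\tau_i$ is an involution of $\mathrm{AGL}(1, n)$. Such involutions have cycle type $2^{(n-f)/2} \cdot 1^f$, with the admissible fixed-point count $f$ determined by the order-$2$ elements of $(\mathbb{Z}/n\mathbb{Z})^\times$ and divisibility facts for $n$. Because $k \geq 3$ and the tree has $n-1$ edges, pigeonhole forces at least one $\tau_i$ to have at most $\lfloor (n-1)/3\rfloor$ transpositions; the resulting incompatibility with the admissible cycle types for small $d$ forces $d \geq n/(n - \varphi(n))$. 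In the second case $\G_\kappa \not\subseteq \mathrm{AGL}(1, n)$, so the conjugation orbit of $H$ has size $\geq 2$; combining the count $[\G_\kappa : C_{\G_\kappa}(c)] = m$ of conjugates of $c$ (distributed $d$ per conjugate of $H$, where $d = |N_{\G_\kappa}(H)/H|$) with the resulting contribution $m\varphi(n)/d$ of distinct $n$-cycles yields the bound after additional case work.

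The main obstacle will be the cycle-type bookkeeping for composite $n$ with multiple square-roots of $1$ modulo $n$, where $\mathrm{AGL}(1, n)$ admits many involutions of varied cycle types and the pigeonhole must be sharpened. For $n$ prime---the tightest case, demanding $|\G_\kappa| \geq n^2$---I expect a clean finish via Burnside's classification of transitive permutation groups of prime degree: such a group is either contained in $\mathrm{AGL}(1, n)$ or is $2$-transitive, and ruling out the affine option by the cycle-type argument forces $\G_\kappa \supseteq A_n$, yielding $|\G_\kappa| \geq n!/2 \geq n^2$.
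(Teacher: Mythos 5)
Your proposal is a plan rather than a proof, and the two steps that carry all the weight are exactly the ones you leave open. For contrast, the paper's argument is short and elementary: label a leaf $1$ whose incident edge is colored $1$; every long cycle of the form $\sigma=\tau_1\tau_{i_1}\cdots\tau_{i_{k-1}}$ satisfies $\sigma(1)=\tau_1(1)$; two \emph{distinct} long cycles $\sigma_1\neq\sigma_2$ of this form must generate distinct cyclic subgroups of order $n$ (if $\sigma_1=\sigma_2^{\,r}$ with $1<r<n$ then $\sigma_1$ and $\sigma_2$ disagree at every point, contradicting agreement at $1$), and then $\left|\G_\kappa\right|\geq\left|\langle\sigma_1\rangle\langle\sigma_2\rangle\right|=n^2/\left|\langle\sigma_1\rangle\cap\langle\sigma_2\rangle\right|\geq n^2/(n-\varphi(n))$, since the intersection contains no generator of $\langle\sigma_1\rangle$. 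No normalizers, conjugacy counts, or classification of transitive groups of prime degree are needed.

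The concrete gaps in your version are these. First, in Case 2 the count you describe is vacuous: you have $m=[\G_\kappa:H]=|\G_\kappa|/n$ conjugates of $c$, distributed $d$ per conjugate of $H$, hence $m\varphi(n)/d$ distinct $n$-cycles; comparing with $|\G_\kappa|=mn$ yields only $d\geq\varphi(n)/n$, which is always true. The ``additional case work'' you defer is the entire content of the lemma, and the missing idea is precisely the product-set computation $|H_1H_2|=n^2/|H_1\cap H_2|$ for two distinct cyclic subgroups of order $n$, whose intersection is a proper subgroup and so has order at most $n-\varphi(n)$. Second, the Burnside finish for prime $n$ is wrong as stated: a $2$-transitive group of prime degree need not contain $A_n$ (consider $PSL_2(11)$ and $M_{11}$ of degree $11$, $M_{23}$ of degree $23$, or $PGL_d(q)$ of prime degree $\frac{q^d-1}{q-1}$; these are exactly the exceptions catalogued in \Cref{thm:primitive_trees}). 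One could salvage $|\G_\kappa|\geq 2p(p-1)\geq p^2$ only by additionally invoking that the sharply $2$-transitive groups of prime degree are exactly $AGL_1(p)$. Third, Case 1, which you flag as an unresolved obstacle, is in fact completable but you do not complete it: for even $n$ one has $n^2/(n-\varphi(n))\leq 2n$, so $d\geq2$ suffices and follows from $\G_\kappa$ being nonabelian; for odd $n$ every nonidentity involution $x\mapsto ax+b$ in $N_{S_n}(H)$ has $\gcd(a-1,n)\leq n/3$ fixed points and hence at least $n/3$ transpositions, contradicting the existence of a color class with at most $\lfloor(n-1)/3\rfloor$ edges. As written, the proposal does not establish the bound.
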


\begin{proof}
    Note that if $\sigma_1$ and $\sigma_2$ are distinct long cycles in $S_n$ which generate the same subgroup, then $\sigma_1={\sigma_2}^r$ for some $1<r<n$. Then because ${\sigma_2}^s$ has a fixed point if and only if $n|s$, \[\sigma_1(a)={\sigma_2}^r(a)={\sigma_2}^{r-1}(\sigma_2(a))\neq{\sigma_2}(a).\]

    Without loss of generality, suppose that $G$ has a leaf labeled $1$ incident to an edge colored $1$. Then any long cycle of the form \begin{align*}
        \sigma=\tau_1 \tau_{i_1}\dots\tau_{i_{k-1}}
    \end{align*} has the property that $\sigma(1)=\tau_1(1)$. Because there are at least three colors there must be at least two colors that don't commute with each other. Hence, there are at least two long-cycles $\sigma_1$ and $\sigma_2$ of this form. Because $\sigma_1(1)=\sigma_2(1)$, they must generate distinct subgroups. Then $\G_\kappa$ has a subset $\langle\sigma_1\rangle\langle\sigma_2\rangle$ with order \[\left|\langle\sigma_1\rangle\langle\sigma_2\rangle\right|=\frac{\left|\langle\sigma_1\rangle\right|\cdot\left|\langle\sigma_2\rangle\right|}{\left|\langle\sigma_1\rangle\cap\langle\sigma_2\rangle\right|}.\] Note that because $\sigma_1$ and $\sigma_2$ generate distinct subgroups, their intersection can only contain elements of the form ${\sigma_1}^m$ where $m$ is not relatively prime to $n$. Hence $\left|\langle\sigma_1\rangle\cap\langle\sigma_2\rangle\right|\leq n-\varphi(n)$.
\end{proof}

The following theorem provides a condition for narrowing down the possible primitive coloring groups that can arise from a graph $G$.

\begin{theorem}\label{thm:primitive_trees}
    Let $\kappa:E(G)\to[k]$ be a proper edge coloring of a graph of order $n$ and suppose that $\G_\kappa$ is primitive. Suppose that restricting to the edges colored by a subset of $\ell$ colors (and their incident vertices) forms a tree $T$ of order $m$. If $n-m\geq 3$, then $A_n\leq \G_\kappa$. Furthermore, if we assume that $\ell>2$, then $A_n\leq \G_\kappa$ unless one of the following hold (where in each case $q$ is a prime power):
    \begin{itemize}
        \item[(i)] $n-m=2$,\;$n=q+1$,\; $PGL_2(q)\leq \G_\kappa\leq P\Gamma L_2(q)$, or
        \item[(ii)] $n-m=1$ and either
        \begin{itemize}
            \item[(a)] $n=q^d$,\;$d\geq1$,\;$AGL_d(q)\leq\G_\kappa\leq A\Gamma L_d(q)$, or
            \item[(c)] $n=24$,\;$\G_\kappa=M_{24}$, or
        \end{itemize}
        \item[(iii)] $n-m=0$ and either
        \begin{itemize}
            \item[(a)] $n=\frac{q^d-1}{q-1}$,\;$d\geq2$,\;$PGL_d(q)\leq\G_\kappa\leq P\Gamma L_d(q)$, or
            \item[(b)] $n=23$,\;$\G_\kappa=M_{23}$.
        \end{itemize}
    \end{itemize}
\end{theorem}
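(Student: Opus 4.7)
The plan is to produce a permutation of cycle type $(m, 1^{n-m})$ inside $\G_\kappa$ and then invoke the classification of primitive permutation groups containing such a cycle.

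First, I would apply \Cref{lem:long_cycle} to the subgraph $T$ obtained by restricting to the chosen $\ell$ colors. The lemma gives that the product of the corresponding $\ell$ generators, taken in any order, is a single $m$-cycle on $V(T)$. The remaining $n-m$ vertices of $G$ are incident to no edge of any of these $\ell$ colors, so each of these generators fixes them pointwise. Their product is therefore an element of $\G_\kappa$ whose cycle type, as a permutation of $V(G)$, is exactly $(m, 1^{n-m})$.

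Second, since $\G_\kappa$ is by hypothesis a primitive subgroup of $S_n = S_{V(G)}$ containing an $m$-cycle with $n-m$ fixed points, I would apply the classification of primitive permutation groups containing a cycle. For $n-m \geq 3$, the classical Jordan--Marggraf--Wielandt line of results suffices to conclude $A_n \leq \G_\kappa$. For $n-m \in \{0,1,2\}$, I would invoke the CFSG-based refinement due to Jones (\emph{Primitive permutation groups containing a cycle}, 2014), whose exceptional families match exactly items (i), (ii), and (iii) of the statement; the theorem then follows by reading off the corresponding cases.

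The real content lies in invoking the right classification; the coloring-group input is essentially a single application of \Cref{lem:long_cycle}. The main obstacle, beyond citing this deep classification, is identifying the precise role of the hypothesis $\ell > 2$ in the refined statement. It should be there to rule out small primitive groups generated by only two involutions whose product is the $m$-cycle---chief among them the dihedral groups of prime degree---which can arise when $\ell = 2$ and would otherwise have to be added to the exceptional list. With $\ell \geq 3$, taking sub-products of the generators yields additional cycles (their cycle types given by the component sizes of the sub-forests obtained by deleting a color) whose combined presence, together with primitivity, is enough to rule out these extra exceptions and leave exactly the families tabulated.
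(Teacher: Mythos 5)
Your first two steps---producing an element of cycle type $(m,1^{n-m})$ via \Cref{lem:long_cycle} applied to the restriction to the $\ell$ chosen colors, and then feeding this into Jones's classification of primitive groups containing a cycle---are exactly the opening move of the paper's proof. But your claim that Jones's exceptional families ``match exactly'' items (i)--(iii) of the statement is false, and this is where the real content of the proof lives. Jones's Theorem 1.2 lists several further families that do \emph{not} appear in the statement: for $n-m=0$ it also allows $C_p\leq \G_\kappa\leq AGL_1(p)$ with $n=p$ prime, as well as $PSL_2(11)$ and $M_{11}$ at degrees $11$ and $11$; for $n-m=1$ it also allows $PSL_2(p)$ and $PGL_2(p)$ at degree $p+1$ for primes $p\geq 5$, as well as $M_{11}$ and $M_{12}$ at degree $12$. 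Each of these must be eliminated, and the majority of the paper's proof is devoted to doing so.

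You correctly sense that the hypothesis $\ell>2$ is what makes these eliminations possible, and your guess about dihedral-type groups is in the right spirit for one of them (the paper kills $C_p\leq\G_\kappa\leq AGL_1(p)$ using \Cref{lem:size_bound}, which requires at least three colors and forces $|\G_\kappa|\geq p^2>p(p-1)=|AGL_1(p)|$). But ``taking sub-products of the generators\dots is enough to rule out these extra exceptions'' is not an argument, and the remaining eliminations are genuinely nontrivial. For the $PGL_2(p)/PSL_2(p)$ case at degree $p+1$, the paper observes that the graph contains a monochromatically-restricted subtree of prime order $p$, whose coloring group is primitive of prime degree and hence itself constrained by case (iii); it then carries out an explicit order comparison showing $|PGL_d(q)|>|PGL_2(p)|$ for $q\geq 3$, $d\geq 3$ (via a Cauchy-bound inequality), handles $d=2$ by a divisibility contradiction, and treats $q=2$, $3\leq d\leq 8$ separately. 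The Mathieu and $PSL_2(11)$ cases are eliminated by comparing against the computed list of orders of coloring groups on trees of orders $10$ and $11$ with at least three colors (from the paper's appendix of data), checking that $M_{11}$ and $M_{12}$ have no subgroups of those orders. Without these steps your proof establishes only a weaker theorem with a strictly longer list of exceptions.
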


\begin{proof}
    Much of the heavy-lifting in this proof is done by noting that the existence of such a tree $T$ guarantees a cycle in $\G_\kappa$ that fixes $n-m$ points using \Cref{lem:long_cycle}, and then applying \cite[Theorem 1.2]{jones2014primitive}. We need only rule out the following cases:
    \begin{itemize}
        \item $n-m=1,\; p\geq5\text{ is prime,}\; n=p+1,\; \G_\kappa=PGL_2(p)$ or $\G_\kappa=PSL_2(p)$.
        
        The graph contains a subtree of order $p$. Because the coloring group corresponding to this tree has prime degree, it is primitive and so must fall under case (iii) of the theorem. If the coloring group corresponding to this tree were to contain the alternating group or be $M_{23}$, it's clear that $PGL_2(p)$ and $PSL_2(p)$ would be too small to contain it. It must then be the case that $PGL_d(q)\leq\G_\kappa\leq PGL_2(p)$ where $p=\frac{q^d-1}{q-1}$ for some $d\geq2$. This containment implies that the order of $PGL_d(q)$ divides the order of $PGL_2(p)$. That is: \begin{align*}
            \frac{(q^d-1)(q^d-1)\cdots(q^d-q^{d-1})}{q-1} \mid p^3-p.
        \end{align*} We proceed by showing this is impossible.

        By applying Cauchy's bound to the polynomial $q^{\binom{d-2}{2}-1}(q^d-q)-(q^d-1)^2$ and noting its positive leading coefficient, we obtain the inequality \begin{align}
            q^{\binom{d-2}{2}-1}>\frac{(q^d-1)^2}{q^d-q}\label{eq:inequality}
        \end{align} as long as $q\geq3,d\geq3$. From this inequality it is straightforward to derive that $|PGL_d(q)|>|PGL_2(p)|$:
        \begin{align*}
            \frac{(q^d-1)^2}{q^d-q}&<q^{\binom{d-2}{2}-1}\\
            &<q^{\binom{d-2}{2}-1}\prod_{i=2}^{d-1}(q^{d-1}-i)\\
            &=\prod_{i=2}^{d-1}(q^d-q^i)\\
            &<(q-1)^2\prod_{i=2}^{d-1}(q^d-q^i)\\
            \frac{(q^d-1)^2}{(q^d-q)^2}&<\prod_{i=1}^{d-1}(q^d-q^i)\\
            p^3&<p\prod_{i=1}^{d-1}(q^d-q^i)\\
            &=|PGL_d(q)|
        \end{align*}

        Hence, as long as $q\geq3$ and $d\geq3$, we have that $|PGL_d(q)|>p^3>p^3-p=|PGL_2(p)|$ and the above containment is impossible.

        For the case $d=2$, the containment implies that \[\frac{(q^2-1)(q^2-q)}{q-1}=p(q^2-q)\mid p^3-p=p(q-2)q.\] This implies that $q-1\mid q-2$, which is impossible for $q>1$.

        The final case is when $q=2$ and $d\geq 3$. If Inequality \ref{eq:inequality} does not hold, it is straightforward to show that $d\leq8$. The only values between $3$ and $8$ for which $2^d-1$ is prime are $d=3,5,$ or $7$. When $d=3$, we have that $p=7$, so $\G_\kappa$ must contain $S_7$ (See \Cref{sec:data}). For $d=5,7$ we have that $|PGL_d(q)|>|PGL_2(p)|$.

        \item $n-m=1$,\;$\G_\kappa=M_{11}$
        
        In this case, the graph contains a subtree of order 10. The orders of coloring groups generated on a tree of order 10 with at least three colors are $200,\; 240,\;  2^5\cdot 5!,\; 120^2,$ and $10!$ (see \Cref{sec:data}). An enumeration of the subgroups of $M_{11}$ shows that it has no subgroups of these orders.
        \item $n-m=1,\;\G_\kappa=M_{12}$

        The orders of coloring groups on a tree with 11 vertices with at least three colors are $\frac{11!}2$ or $11!$ (See \Cref{sec:data}). Both of these orders are too large to be contained in $M_{12}$.

        \item $n-m=0$, \;$\G_\kappa=PSL_2(11)$ or $\G_\kappa=M_{11}$.

        See the orders given above for coloring groups on a tree with 11 vertices.

        \item $n-m=0$, \;$n=p$, a prime,  $C_p\leq \G_\kappa\leq AGL_1(p)$

        Note that $|AGL_1(p)|=p(p-1)$, but \Cref{lem:size_bound} tells us that a tree on a prime number of vertices with at least three colors produces a coloring group of order at least $p^2$.
    \end{itemize}
\end{proof}

Although we have not ruled out case (iii)b in \Cref{thm:primitive_trees}, we have no example of a \emph{tree} $G$ which realizes $M_{23}$ as a coloring group, and we conjecture that no such graph exists. Note that the nonexistence of such a tree would also rule out case (ii)c.

\begin{lemma}\label{lem:centralizer_of_tree}
    For $(G,\kappa)$ a proper edge coloring of a tree, $\Aut_\kappa(G)$ is either trivial or has order two.
\end{lemma}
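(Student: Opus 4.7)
The plan is to combine \Cref{lem:fixing_a_vertex} with two standard facts about tree automorphisms. My first observation is that any $\sigma \in \Aut_\kappa(T)$ with a fixed vertex $v$ must be the identity: by \Cref{lem:fixing_a_vertex}, $\sigma$ fixes every neighbor of $v$; iterating this, $\sigma$ fixes every vertex reachable from $v$, and since $T$ is connected, $\sigma$ fixes $V(T)$ pointwise. Thus every nontrivial element of $\Aut_\kappa(T)$ acts without fixed vertices.

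Next I will invoke the classical fact that every automorphism of a finite tree either fixes a vertex or swaps two adjacent vertices. (The standard proof observes that the center of $T$—the set of vertices minimizing the eccentricity—is either a single vertex or a single edge, and is preserved setwise by any automorphism.) Combined with the previous paragraph, this means that if $\Aut_\kappa(T)$ contains any nontrivial element, then $T$ has a central edge $\{u,v\}$, and every nontrivial $\sigma \in \Aut_\kappa(T)$ satisfies $\sigma(u)=v$ and $\sigma(v)=u$.

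To conclude, suppose $\sigma_1, \sigma_2$ are nontrivial elements of $\Aut_\kappa(T)$. Both swap the central vertices $u$ and $v$, so $\sigma_1\sigma_2^{-1}$ fixes $u$. By the first observation, $\sigma_1\sigma_2^{-1}$ is the identity, whence $\sigma_1 = \sigma_2$. Therefore $\Aut_\kappa(T)$ has at most one nontrivial element, and $|\Aut_\kappa(T)| \in \{1,2\}$.

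The proof involves no real obstacle: the propagation argument from \Cref{lem:fixing_a_vertex} is immediate from connectedness of $T$, and the fact that a finite tree automorphism fixes either a vertex or a central edge is a standard result that can simply be cited. The only modest care needed is making sure the uniqueness argument is phrased so that it does not implicitly assume $\sigma_1$ and $\sigma_2$ swap the \emph{same} pair of vertices—this follows automatically from the uniqueness of the center of $T$.
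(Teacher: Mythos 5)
Your proof is correct and follows essentially the same route as the paper: both arguments reduce to the dichotomy that the center of a finite tree is a single vertex or a single edge, use \Cref{lem:fixing_a_vertex} to kill any automorphism with a fixed vertex, and observe that the composite of two nontrivial automorphisms fixes a central vertex and is therefore trivial. Your version is slightly more explicit about propagating \Cref{lem:fixing_a_vertex} through the whole tree by connectedness, a step the paper leaves implicit.
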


\begin{proof}

    If $G$ has a unique center, then every automorphism of $G$ must fix this center. By \Cref{lem:fixing_a_vertex}, $\Aut_\kappa(G)$ is trivial.

    Alternatively, if $G$ has two vertices $v_1$ and $v_2$ of maximum eccentricity, a nontrivial automorphism $\varphi$ must swap $v_1$ and $v_2$. If $\psi$ were another such automorphism, then $\psi\varphi$ would fix $v_1$ and $v_2$, and by \Cref{lem:fixing_a_vertex}, $\psi\varphi$ is the trivial automorphism and $\psi=\varphi^{-1}$. Hence, if $\Aut_\kappa(G)$ is nontrivial, it consists of a single automorphism of order 2.
\end{proof}

The following theorem characterizes coloring groups on trees by their centralizer using \Cref{thm:centralizer_of_coloring}.

\begin{theorem}\label{thm:signed_permutations}
    For $(G,\kappa)$ a proper edge coloring of a tree, either $C_{S_{V(G)}}(\G_\kappa)$ is trivial, or $|G|=2m$ is even and $\mathfrak{G}_\kappa$ is isomorphic to a subgroup of the type B coxeter group $B_m$.
\end{theorem}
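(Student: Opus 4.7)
The plan is to chain together the earlier results to identify the nontrivial case of the centralizer with a fixed-point-free involution, and then to recognize the type-$B$ Coxeter group as the centralizer of such an involution in the symmetric group.

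First, I would invoke \Cref{thm:centralizer_of_coloring} to identify $C_{S_{V(G)}}(\G_\kappa)$ with $\Aut_\kappa(G)$, and then \Cref{lem:centralizer_of_tree} to conclude that this group is either trivial or of order two. In the trivial case there is nothing to prove, so assume $\Aut_\kappa(G) = \{e, \sigma\}$ where $\sigma$ is an involution of $V(G)$.

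The next key step is to verify that $\sigma$ is fixed-point-free. If $\sigma$ fixed some vertex $v$, then by \Cref{lem:fixing_a_vertex} it would fix every neighbor of $v$; iterating and using the fact that $G$ is connected (being a tree), $\sigma$ would fix every vertex and be the identity, a contradiction. Hence $\sigma$ is a product of disjoint transpositions with no fixed points, forcing $|V(G)| = 2m$ for some $m$.

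Finally, I would identify the ambient group that contains $\G_\kappa$. Since $\sigma$ lies in $C_{S_{V(G)}}(\G_\kappa)$, every element of $\G_\kappa$ commutes with $\sigma$, so
\[
\G_\kappa \;\subseteq\; C_{S_{V(G)}}(\sigma).
\]
Writing $\sigma = (a_1, b_1)(a_2, b_2)\cdots(a_m, b_m)$, the centralizer in $S_{2m}$ of this fixed-point-free involution is the stabilizer of the partition $\{\{a_i, b_i\} : 1 \le i \le m\}$, which is the wreath product $C_2 \wr S_m \cong B_m$: a permutation commutes with $\sigma$ exactly when it permutes the blocks $\{a_i, b_i\}$ among themselves, with the freedom to swap the two elements of each block. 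This gives $\G_\kappa \leq B_m$ as desired.

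The main obstacle, and really the only nonroutine input, is ensuring $\sigma$ is fixed-point-free so that the centralizer really is a hyperoctahedral group rather than some wreath product over a smaller set with extra fixed points contributing an $S_k$ factor; this is precisely where the tree hypothesis (via connectivity and \Cref{lem:fixing_a_vertex}) is essential.
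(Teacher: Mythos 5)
Your proposal is correct, and it takes a genuinely different route from the paper. The paper never mentions fixed-point-freeness: instead it locates the central edge $e$ of the bicentral tree, splits $G\setminus\{e\}$ into two isomorphic halves $T_1, T_2$, recolors $e$ with a fresh color $c$, and shows that the resulting coloring group $\G_{\kappa'}$ decomposes as $N\rtimes\G_{\kappa_1}$ with $N\cong C_2^m$ generated by the conjugates of $\tau_c$, hence $\G_{\kappa'}\cong C_2\wr\G_{\kappa_1}\leq C_2\wr S_m\cong B_m$, and finally $\G_\kappa\subseteq\G_{\kappa'}$. Your argument instead exploits \Cref{thm:centralizer_of_coloring} and \Cref{lem:centralizer_of_tree} to produce a single nontrivial $\sigma\in C_{S_{V(G)}}(\G_\kappa)$, proves via \Cref{lem:fixing_a_vertex} and connectivity that $\sigma$ is a fixed-point-free involution (forcing $|G|=2m$), and then uses the standard identification of $C_{S_{2m}}(\sigma)$ with the stabilizer of the induced pair-partition, i.e.\ $C_2\wr S_m\cong B_m$. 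Both are valid; yours is shorter, is really just an explicit computation of the containment $\G_\kappa\subseteq C_{S_{V(G)}}(\Aut_\kappa(G))$ already recorded as a corollary of \Cref{thm:centralizer_of_coloring}, and avoids the structural claims the paper leaves partly unjustified (that $\varphi$ restricts to a color-preserving isomorphism $T_1\to T_2$ and that $N\cong C_2^m$). What the paper's approach buys in exchange is finer information: an explicit semidirect-product decomposition of the enlarged group $\G_{\kappa'}$ as a wreath product over the smaller coloring group $\G_{\kappa_1}$, which is reused in \Cref{sec:constructions} to actually construct $B_m$ as a coloring group.
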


\begin{proof} 
    If $\Aut_\kappa(G)$ is nontrivial and $e$ is the edge between the two vertices of $G$ of maximum eccentricity. The graph $G\setminus\{e\}$ is the disjoint union of two trees $T_1$ and $T_2$. The nontrivial element $\varphi\in\Aut_\kappa(G)$ restricts to an isomorphism between $T_1$ and $T_2$. Let $\kappa_1$ be the edge coloring of $T_1$ and let $\kappa'$ be the edge coloring of $G$ modified so that the edge $e$ has a unique color $c$. It's clear that $\G_{\kappa_1}$ is a subgroup of $\G_{\kappa'}$. Let \begin{align*}
        N&=\{g\tau_cg^{-1}:g\in\G_{\kappa'}\}\cup\{1\}\subseteq \G_{\kappa'}.
    \end{align*} Then $N$ is a normal subgroup of $\G_{\kappa'}$ isomorphic to ${C_2}^m$ where $m$ is the order of $T_1$ and $\G_{\kappa'}=N G_{\kappa_1}$. Hence, $\G_{\kappa'}=N\rtimes G_{\kappa_1}$ where $G_{\kappa_1}$ acts on $N$ by permutation, meaning $\G_{\kappa'}$ can be recognized as the wreath product \begin{align*}
        \G_{\kappa'}&\cong C_2\omega \G_{\kappa_1}\\
        &\leq C_2\omega S_m\\
        &\cong B_m.
    \end{align*}

    Finally, we have that the original coloring group $\G_\kappa\subseteq \G_{\kappa'}$.
\end{proof}

\begin{example}\label{ex:symmetric_tree} The following tree has a color-preserving automorphism that reflects the graph horizontally. The labels are taken from $\{-6,\dots,-1,1,\dots,6\}$ to show how the corresponding coloring group embeds in the group $B_6$ of signed permutations.
    \begin{center}
         \begin{tikzpicture}[scale = .35]
        \node (1) at (-10, 0) {-6};
        \node (2) at (-6, 0) {-5};
        \node (3) at (-2, -2) {-4};
        \node (4) at (-6, -2) {-3};
        \node (5) at (-6,-4) {-2};
        \node (6) at (-2, 0) {-1};
        \node (7) at (2, 0) {1};
        \node (8) at (6, -4) {2};
        \node (9) at (6, -2) {3};
        \node (10) at (2,-2) {4};
        \node (11) at (6,0) {5};
        \node (12) at (10,0) {6};
        \draw[color = c1, thick] (2) -- (6);
        \draw[color = c1, thick] (3) -- (5);
        \draw[color = c1, thick] (8) -- (10);
        \draw[color = c1, thick] (7) -- (11);
        \draw[color = c2, thick] (3) -- (4);
        \draw[color = c2, thick] (9) -- (10);
        \draw[color = c2, thick] (1) -- (2);
        \draw[color = c2, thick] (11) -- (12);
        \draw[color = c3, thick] (2) -- (3);
        \draw[color = c3, thick] (10) -- (11);
        \draw[color = c3, thick] (6) -- (7);
        
    \end{tikzpicture}
    \end{center}
\end{example}

Finally, we provide a sufficient condition for a coloring group on a tree of order $n$ to be isomorphic to the symmetric group $S_n$.

\begin{definition} Let $G$ be a graph with proper edge coloring $\kappa:E(G)\to[k]$. Let $e$ be an edge such that all edges incident to $e$ are of distinct colors. We call $e$ a \emph{symmetric edge} if there exists a subset $S$ of $[k]$ containing the colors of all edges incident to $e$ but not containing the color of $e$ such that removing all edges colored by a color in $S$ leaves a graph with exactly one even component. 
\end{definition}

\begin{example}

The following edge coloring has a symmetric edge labeled $e$. Notice that removing the edges colored blue and orange leave only a single connected component with even order.
\begin{center}
    \begin{tikzpicture}[scale = .5]
        \node (Label) at (-11, 0) {};
        \node (Label) at (2, .5) {$e$};
        \node (1) at (-9, 0) {};
        \node (2) at (-7, 0) {};
        \node (3) at (-5, 0) {};
        \node (4) at (-3, 0) {};
        \node (5) at (-1,0) {};
        \node (6) at (1, 0) {};
        \node (7) at (3, 0) {};
        \node (8) at (5, 0) {};
        \node (9) at (7, 0) {};
        \node (10) at (9,0) {};
        \draw[color = c4, thick] (1) -- (2);
        \draw[color = c3, thick] (3) -- (2);
        \draw[color = c1, thick] (3) -- (4);
        \draw[color = c2, thick] (5) -- (4);
        \draw[color = c1, thick] (5) -- (6);
        \draw[color = c3, thick] (6) -- (7);
        \draw[color = c2, thick] (7) -- (8);
        \draw[color = c4, thick] (8) -- (9);
        \draw[color = c3, thick] (9) -- (10);
        \draw[color = black, fill] (1) circle (2pt);
        \draw[color = black, fill] (2) circle (2pt);
        \draw[color = black, fill] (3) circle (2pt);
        \draw[color = black, fill] (4) circle (2pt);
        \draw[color = black, fill] (5) circle (2pt);
        \draw[color = black, fill] (6) circle (2pt);
        \draw[color = black, fill] (7) circle (2pt);
        \draw[color = black, fill] (8) circle (2pt);
        \draw[color = black, fill] (9) circle (2pt);
        \draw[color = black, fill] (10) circle (2pt);
    \end{tikzpicture}
\end{center}

\begin{center}
    \begin{tikzpicture}[scale = .5]
        \node (Label) at (-11, 0) {};
        \node (Label) at (2, .5) {$e$};
        \node (1) at (-9, 0) {};
        \node (2) at (-7, 0) {};
        \node (3) at (-5, 0) {};
        \node (4) at (-3, 0) {};
        \node (5) at (-1,0) {};
        \node (6) at (1, 0) {};
        \node (7) at (3, 0) {};
        \node (8) at (5, 0) {};
        \node (9) at (7, 0) {};
        \node (10) at (9,0) {};
        \draw[color = c4, thick] (1) -- (2);
        \draw[color = c3, thick] (3) -- (2);
        \draw[color = c3, thick] (6) -- (7);
        \draw[color = c4, thick] (8) -- (9);
        \draw[color = c3, thick] (9) -- (10);
        \draw[color = black, fill] (1) circle (2pt);
        \draw[color = black, fill] (2) circle (2pt);
        \draw[color = black, fill] (3) circle (2pt);
        \draw[color = black, fill] (4) circle (2pt);
        \draw[color = black, fill] (5) circle (2pt);
        \draw[color = black, fill] (6) circle (2pt);
        \draw[color = black, fill] (7) circle (2pt);
        \draw[color = black, fill] (8) circle (2pt);
        \draw[color = black, fill] (9) circle (2pt);
        \draw[color = black, fill] (10) circle (2pt);
    \end{tikzpicture}
\end{center}

An edge coloring can also have symmetric edge incident to a leaf. Notice that removing the edges colored orange as well as those colored blue in the following graph coloring produces a graph with only a single connected component with even order.

\begin{center}
    \begin{tikzpicture}[scale = .5]
        \node (Label) at (-11, 0) {};
        \node (Label) at (8, .5) {$e$};
        \node (1) at (-9, 0) {};
        \node (2) at (-7, 0) {};
        \node (3) at (-5, 0) {};
        \node (4) at (-3, 0) {};
        \node (5) at (-1,0) {};
        \node (6) at (1, 0) {};
        \node (7) at (3, 0) {};
        \node (8) at (5, 0) {};
        \node (9) at (7, 0) {};
        \node (10) at (9,0) {};
        \draw[color = c4, thick] (1) -- (2);
        \draw[color = c3, thick] (3) -- (2);
        \draw[color = c1, thick] (3) -- (4);
        \draw[color = c2, thick] (5) -- (4);
        \draw[color = c1, thick] (5) -- (6);
        \draw[color = c4, thick] (6) -- (7);
        \draw[color = c3, thick] (7) -- (8);
        \draw[color = c2, thick] (8) -- (9);
        \draw[color = c3, thick] (9) -- (10);
        \draw[color = black, fill] (1) circle (2pt);
        \draw[color = black, fill] (2) circle (2pt);
        \draw[color = black, fill] (3) circle (2pt);
        \draw[color = black, fill] (4) circle (2pt);
        \draw[color = black, fill] (5) circle (2pt);
        \draw[color = black, fill] (6) circle (2pt);
        \draw[color = black, fill] (7) circle (2pt);
        \draw[color = black, fill] (8) circle (2pt);
        \draw[color = black, fill] (9) circle (2pt);
        \draw[color = black, fill] (10) circle (2pt);
    \end{tikzpicture}
\end{center}
\begin{center}
    \begin{tikzpicture}[scale = .5]
        \node (Label) at (-11, 0) {};
        \node (Label) at (8, .5) {$e$};
        \node (1) at (-9, 0) {};
        \node (2) at (-7, 0) {};
        \node (3) at (-5, 0) {};
        \node (4) at (-3, 0) {};
        \node (5) at (-1,0) {};
        \node (6) at (1, 0) {};
        \node (7) at (3, 0) {};
        \node (8) at (5, 0) {};
        \node (9) at (7, 0) {};
        \node (10) at (9,0) {};
        \draw[color = c4, thick] (1) -- (2);
        \draw[color = c3, thick] (3) -- (2);
        \draw[color = c4, thick] (6) -- (7);
        \draw[color = c3, thick] (7) -- (8);
        \draw[color = c3, thick] (9) -- (10);
        \draw[color = black, fill] (1) circle (2pt);
        \draw[color = black, fill] (2) circle (2pt);
        \draw[color = black, fill] (3) circle (2pt);
        \draw[color = black, fill] (4) circle (2pt);
        \draw[color = black, fill] (5) circle (2pt);
        \draw[color = black, fill] (6) circle (2pt);
        \draw[color = black, fill] (7) circle (2pt);
        \draw[color = black, fill] (8) circle (2pt);
        \draw[color = black, fill] (9) circle (2pt);
        \draw[color = black, fill] (10) circle (2pt);
    \end{tikzpicture}
\end{center}
\end{example}

\begin{theorem}\label{thm:symmetric_edge}
    Let $(G,\kappa)$ be a proper edge coloring of a tree. If $G$ contains a symmetric edge, then $\G_\kappa\cong S_{V(G)}$.
\end{theorem}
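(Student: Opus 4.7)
The plan is to first produce a single transposition $(u,v) \in \G_\kappa$ from the symmetric edge, then propagate it along the tree to obtain $(u,w) \in \G_\kappa$ for every vertex $w$, and conclude that $\G_\kappa = S_{V(G)}$.

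Let $e = \{u,v\}$ be a symmetric edge of color $c$ with associated set of colors $S$. Because $S$ contains the colors of all edges incident to $e$ but not $c$, after deleting the edges colored in $S$ the vertices $u$ and $v$ remain connected only by $e$, so the component containing $\{u,v\}$ is the order-$2$ component of the resulting forest $F$; by hypothesis every other component of $F$ has odd order. Applying \Cref{lem:long_cycle} to $F$, the product $P = \prod_{a \notin S} \tau_a$ (in any order) has cycle type equal to the list of component orders, and its unique $2$-cycle must be $(u,v)$. Writing $P = (u,v)\cdot Q$ where $Q$ is the product of the odd-length cycles supported off $\{u,v\}$, the two factors commute and $Q$ has odd order $N$, so $P^N = (u,v) \in \G_\kappa$.

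Next, I use the distinctness-of-colors part of the symmetric-edge hypothesis to propagate this transposition to every edge of $G$ incident to $u$ or $v$. For a neighbor $x$ of $u$ with $x \neq v$, the color $a := \kappa(\{u,x\})$ appears on no other edge incident to $e$, so $\tau_a$ fixes $v$ while sending $u \mapsto x$. Thus $\tau_a(u,v)\tau_a^{-1} = (x,v) \in \G_\kappa$, and the product $(u,v)(x,v)(u,v) = (u,x)$ is in $\G_\kappa$; the analogous statement holds for every neighbor of $v$. I then induct on the distance from $u$ in $G$: given $(u,w) \in \G_\kappa$ and a neighbor $w'$ of $w$ farther from $u$, with $\kappa(\{w,w'\}) = b$, conjugation gives $\tau_b(u,w)\tau_b^{-1} = (\tau_b(u), w')$. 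Either $\tau_b(u) = u$ and we are done, or $\tau_b(u) = u''$ for some neighbor $u''$ of $u$; in the second case $(u,u'') \in \G_\kappa$ by the base case, so the identity $(u,u'')(u'',w')(u,u'') = (u,w')$ delivers the desired transposition (and the properness of $\kappa$ rules out the degenerate case $u'' = w'$, since the edges $\{u,u''\}$ and $\{w,w'\}$ would then both be colored $b$ at $w'$). Iterating gives $(u,w) \in \G_\kappa$ for every $w \in V(G)$, and since these star transpositions generate $S_{V(G)}$, we conclude $\G_\kappa = S_{V(G)}$.

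The main obstacle is packaging the base case cleanly: one has to use both the distinctness of colors on edges incident to $e$ (to ensure $\tau_a$ moves $u$ without disturbing $v$) and the parity condition on the components of $F$ (so that raising $P$ to an odd power preserves $(u,v)$ while killing $Q$). Once the transpositions $(u,x)$ for $x \in N(u)$ are in hand, the inductive propagation is driven entirely by the properness of $\kappa$, and the two-case analysis on whether $\tau_b$ moves $u$ is exhaustive.
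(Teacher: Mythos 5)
Your proof is correct, and the two halves split neatly against the paper's. The first half is essentially identical to the paper's argument: both delete the edges colored by $S$, apply \Cref{lem:long_cycle} to the resulting forest to see that the product of the remaining generators has exactly one $2$-cycle, namely the symmetric edge, and raise to an odd power (your $N$, the paper's $\mathrm{lcm}$ of the odd component orders) to isolate the transposition $(u,v)$. The second half is where you genuinely diverge. The paper invokes \Cref{lem:long_cycle} a second time: it factors the full product of generators as $\sigma_1\tau_{\kappa(e)}\sigma_2$ with the colors at $i$ pulled to the left, uses the distinctness of colors on edges incident to $e$ to conclude $\sigma(i)=j$, and then cites the classical fact that an $n$-cycle together with a transposition of two $\sigma$-adjacent points generates $S_n$. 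You instead propagate the transposition through the tree by conjugating with single generators: the distinctness of colors at $e$ gives you the star $\{(u,x): x\in N(u)\}$, and properness alone drives the induction on distance from $u$ to produce $(u,w)$ for every vertex $w$. Your case analysis (whether $\tau_b$ fixes $u$, and the exclusion of $u''=w'$ via properness) is exhaustive and the induction is well-founded since each vertex of a tree has a unique parent. The paper's route is shorter because it recycles the long-cycle lemma; yours is more elementary in that the second half needs nothing beyond properness and conjugation, and it isolates a slightly more quotable intermediate fact, namely that a coloring group on a tree containing the transposition of an edge whose incident edges all have distinct colors must be the full symmetric group.
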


\begin{proof} Let $e=\{i,j\}$ be a symmetric edge in $G$ with corresponding subset of colors $S$. Let $H$ be the graph obtained by deleting all edges colored by a color in $S$ and $m$ the least common multiple of the orders of odd-sized connected components in $H$. Let \begin{align*}
    \pi=\prod_{a\in [k]\setminus S}\tau_a,
\end{align*} with the product taken in any order. Then because the edge $e$ is the only even-sized component in $\sigma$, \Cref{lem:long_cycle} says that \begin{align*}
    \pi^m=(i,j).
\end{align*}

Let $K_i$ and $K_j$ be the sets of colors of edges incident to $i$ and $j$ respectively. Let \begin{align*}
    \sigma_1&=\prod_{a\in K_i\setminus\{\kappa(e)\}}\tau_a\\
    \sigma_2&=\prod_{a\in [k]\setminus K_i} \tau_a
\end{align*} with products taken in any order. It's clear that $\sigma_2(i)=i$, but because $K_i\cap K_j=\{\kappa(e)\}$, we also have that $\sigma_1(j)=j$. By \Cref{lem:long_cycle}, $\sigma=\sigma_1\tau_{\kappa(e)}\sigma_2$ is an $n$-cycle, and by the above observation, $\sigma(i)=\sigma_1\tau_{\kappa(e)}(i)=\sigma_1(j)=j$. The transposition $(i,j)$ and the $n$-cycle $\sigma$ with $\sigma(i)=\sigma(j)$ generate the full symmetric group $S_{V(G)}$.
\end{proof}

The symmetric edge condition has allowed us to more efficiently search colorings of trees by weeding out colorings which we know will give the symmetric group. See \Cref{sec:data} for some data on coloring groups on small trees.

\section{Applications}\label{sec:applications}

\subsection{Generalized Toggle Groups}

The following condition for a coloring group on a tree to be isomorphic to a generalized toggle group is due to Jonathan Bloom. 

\begin{definition}
    A word $w\in[k]^\ell$ is called a \emph{toggle word} if every contiguous subword of $w$ has at least two distinct letters that occur with odd parity.
\end{definition}

\begin{theorem}
    Let $(G,\kappa)$ be a proper edge coloring of a tree. A necessary condition for $\G_\kappa$ to be a generalized toggle group is that for every path of length greater than one, reading the edge colors along that path must be a toggle word.
\end{theorem}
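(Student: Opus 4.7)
The plan is to translate each path in $G$ into a sequence of toggles and then use symmetric-difference calculus to turn any failure of the toggle-word condition into either a repeated vertex or a short-circuit edge inside the tree. If $\G_\kappa$ is a generalized toggle group, then there is a finite set $E$, a collection $L \subseteq 2^E$, and a bijection between $V(G)$ and $L$ under which each generating involution $\tau_a$ of $\G_\kappa$ coincides with the toggle $\tau_{e(a)}$ of $T(L)$ for some element $e(a) \in E$. I identify each vertex of $G$ with its subset in $L$ and each color $a \in [k]$ with the corresponding toggled element. Under this identification, an edge of $G$ colored $a$ is exactly a pair $\{X, X \triangle \{a\}\}$ with both $X$ and $X \triangle \{a\}$ in $L$, and traversing such an edge applies the involution $\tau_a$ to the current vertex.

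Now consider a path $X_0, X_1, \ldots, X_\ell$ in $G$ whose consecutive edges are colored $a_1, a_2, \ldots, a_\ell$. Because each step is a genuine edge, the corresponding toggle succeeds, giving $X_k = X_{k-1} \triangle \{a_k\}$. Telescoping,
\[
X_i \triangle X_j \;=\; \{a_{i+1}\} \triangle \{a_{i+2}\} \triangle \cdots \triangle \{a_j\},
\]
which is exactly the set of colors that occur an odd number of times in the contiguous sub-word $a_{i+1} a_{i+2} \cdots a_j$. Arguing by contrapositive, suppose a path in $G$ of length $\ell \geq 2$ has a color sequence that fails to be a toggle word, witnessed by a contiguous sub-word $a_{i+1} \cdots a_j$ of length $j - i \geq 2$ with at most one distinct letter of odd parity. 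If no letter has odd parity, then $X_i \triangle X_j = \emptyset$ and hence $X_i = X_j$, contradicting the distinctness of vertices along a path in a tree. If exactly one letter $a$ has odd parity, then $X_j = X_i \triangle \{a\}$ with both $X_i, X_j \in L$, so $\{X_i, X_j\}$ is itself an edge of $G$ colored $a$; combined with the sub-path of length at least two between $X_i$ and $X_j$, this produces a cycle in $G$, contradicting the tree hypothesis.

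The main obstacle is really the first paragraph: one has to pin down what it means for $\G_\kappa$ ``to be'' a generalized toggle group in such a way that the vertices of $G$ and the colors in $[k]$ genuinely inherit a toggle interpretation (rather than just an abstract group isomorphism), so that the tree structure and the subset-toggle structure line up on the nose. Once that identification is in place, the symmetric-difference telescoping above does the work automatically, and the two-case analysis on the number of odd-parity letters is routine.
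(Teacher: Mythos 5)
Your proof is correct and takes essentially the same approach as the paper's: both identify the colored tree with the Hasse diagram of the toggle poset, track the parity with which each element is toggled along a path, and derive either a repeated vertex (no odd-parity letters) or a chord edge creating a cycle (exactly one odd-parity letter), contradicting acyclicity. Your telescoping symmetric-difference formulation just makes the parity bookkeeping slightly more explicit than the paper's prose version.
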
 \begin{proof}
     This follows from the following observation on toggle graphs. Let $G = P_L$ for some toggle group $T(L)$ and suppose that $w_1 w_2 \dots w_\ell$ is the sequence of elements of the base set corresponding to the coloring for a walk in $G$ with no consecutive letters. Without loss of generality we can assume no vertices of $G$ are repeated.
    
    If each distinct label appears an even number of times, then each element element is toggled in and out an equal number of times so this is a nontrivial closed walk in the graph without consecutive repeated edges, so the walk must be a cycle. 

    If exactly one label appears an odd number of times, call it $e$, then the result of this walk is that $e$ is either toggled in or out of the initial set $S$, as all other labels are toggled in and out an even number of times. That is, this path ends at $\tau_e(S)$. Because $G$ arose from a toggle group, there must be an edge labeled $e$ between $S$ and $\tau_e(S)$. This single edge constitutes a path distinct from $w_1w_2\dots w_\ell$ between $S$ and $\tau_e(S)$, so $G$ must contain a cycle.


    As such if $G$ is acyclic and $\kappa$ corresponds to a coloring which arises from a toggle group, then no nontrivial path of distinct vertices in $G$ can have fewer than two colors which appear an odd number of times.
\end{proof}

\begin{example} The following edge coloring gives rise to a coloring group isomorphic to a generalized toggle group. The subsets are the vertex labels.

\newcommand{\com}{,}
    \begin{center}
        \begin{tikzpicture}[scale = .75]
        \node[label=$\{2\com3\}$] (1) at (-9, 0) {};
        \node[label=$\{1\com2\com3\}$] (2) at (-7, 0) {};
        \node[label=$\{1\com2\}$] (3) at (-5, 0) {};
        \node[label=$\{1\}$] (4) at (-3, 0) {};
        \node[label=$\emptyset$] (5) at (-1,0) {};
        \node[label=$\{4\}$] (6) at (1, 0) {};
        \node[label=$\{3\com4\}$] (7) at (1+2/1.414, 2/1.414) {};
        \node[label=$\{3\com4\com5\}$] (8) at (3+2/1.414, 2/1.414) {};
        \node[label=$\{3\com5\}$] (9) at (5+2/1.414, 2/1.414) {};
        \node[label=below:$\{2\com4\}$] (10) at (1+2/1.414, -2/1.414) {};
        \node[label=below:$\{2\com4\com5\}$] (11) at (3+2/1.414, -2/1.414) {};
        \draw[color = c1, thick] (1) -- (2);
        \draw[color = c3, thick] (2) -- (3);
        \draw[color = c2, thick] (3) -- (4);
        \draw[color = c1, thick] (5) -- (4);
        \draw[color = c4, thick] (5) -- (6);
        \draw[color = c3, thick] (6) -- (7);
        \draw[color = black, thick] (7) -- (8);
        \draw[color = c4, thick] (8) -- (9);
        \draw[color = c2, thick] (6) -- (10);
        \draw[color = black, thick] (10) -- (11);
        \draw[color = black, fill] (1) circle (2pt);
        \draw[color = black, fill] (2) circle (2pt);
        \draw[color = black, fill] (3) circle (2pt);
        \draw[color = black, fill] (4) circle (2pt);
        \draw[color = black, fill] (5) circle (2pt);
        \draw[color = black, fill] (6) circle (2pt);
        \draw[color = black, fill] (7) circle (2pt);
        \draw[color = black, fill] (8) circle (2pt);
        \draw[color = black, fill] (9) circle (2pt);
        \draw[color = black, fill] (10) circle (2pt);
        \draw[color = black, fill] (11) circle (2pt);
    \end{tikzpicture}
    \end{center} The path from the vertex labeled $\{2,3\}$ to the vertex labeled $\{3,5\}$ corresponds to the toggle word \[w=13214354.\]
\end{example}

Note that in \Cref{ex:imprimitive_path}, any path starting in the left-most block of vertices (colored blue) and ending at the next vertex it encounters in that left-most block must encounter every edge color in-between an even number of times. The colors on this path will not be a toggle word, so the coloring group in this example cannot be a generalized toggle group. A similar insight applies generally to trees as stated in the next theorem.

\begin{theorem}\label{thm:toggleTreePrim} Let $(G,\kappa)$ be a proper edge coloring of a tree $G$ with order $n\geq3$. If $\G_\kappa$ is a generalized toggle group, then $\G_\kappa$ is primitive.
\end{theorem}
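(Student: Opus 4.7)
The plan is to derive a contradiction by producing a path of length $\geq 2$ in $G$ whose color word is not a toggle word, violating the necessary condition of the previous theorem. By the imprimitivity lemma, assume there is an imprimitive vertex coloring of $G$; pick a block $B$ with $|B|\geq 2$ and a minimum-distance pair $(v,v^*)$ in $B$ at distance $d$, with unique tree path $v=v_0,v_1,\ldots,v_d=v^*$ carrying colors $c_1,\ldots,c_d$. The argument splits on $d$.

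I first handle $d=1$. If there exists any color $c'\neq c_1$ such that both $v$ and $v^*$ have $c'$-neighbors $u,u^*$, then $u$ and $u^*$ lie in a common block by imprimitivity, and a quick distinctness check shows $u,v,v^*,u^*$ give a genuine path $u-v-v^*-u^*$ with color word $c'c_1c'$; here $c_1$ has odd parity and $c'$ has even, so there is only one distinct odd-parity letter and the word is not a toggle word. If no such $c'$ exists, then the imprimitivity condition forces every non-$c_1$ neighbor of $v$ to lie in $B$, so all neighbors of $v$ are in $B$; since $G$ is connected and $|B|<n$, some edge $(w,z)$ with $w\in B$ and $z\notin B$ exists, and imprimitivity then forces $v$ to have a neighbor in the block of $z\neq B$, a contradiction.

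For $d\geq 2$ I iteratively extend the path at both ends. The key principle is: for any same-block pair at distance $\geq 2$ and any color $c$, either neither endpoint has a $c$-neighbor or both do, because a unilateral $c$-neighbor would have to land inside the block itself, producing a closer same-block pair. Applied to $(v,v^*)$ with colors $c_1$ and $c_d$ this yields new vertices $u_d$ and $u_1$, extending the path to $u_d-v-v_1-\cdots-v^*-u_1$. Applying the same principle iteratively to the pair $(v_{d-k+1},u_d^{(k-1)})$ in block $B_{d-k+1}$ with color $c_{d-k+1}$ (and symmetrically at the other end with color $c_k$), after $k=\lfloor d/2\rfloor$ steps each color $c_i$ with $i\leq k$ or $i>d-k$ appears exactly twice in the resulting color word while the "middle" colors each appear once. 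A direct count then shows at most one distinct odd-parity letter in the full word, so it is not a toggle word.

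The main obstacle is ruling out vertex coincidences during the extension. Any time a newly constructed $u_d^{(k)}$ or $u_1^{(k)}$ equals a previously placed vertex, the induced subgraph of established edges contains a cycle, contradicting that $G$ is a tree. Properness of $\kappa$ (e.g., $c_{d-k+1}\neq c_{d-k+2}$ because the edges $(v_{d-k},v_{d-k+1})$ and $(v_{d-k+1},v_{d-k+2})$ meet at $v_{d-k+1}$) together with the $d=1$ step (adjacent vertices lie in distinct blocks, so $B_i\neq B_{i+1}$) rules out most such collapses; the one remaining coincidence happens at step $1$ when $c_1=c_d$, where $u_d=v_1$ and $u_1=v_{d-1}$. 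In that case the pair $(v_1,v_{d-1})$ lies in block $B_1$ at distance $d-2$, and induction on $d$ closes the argument, with base cases $d=1$ treated above and $d=2$ ruled out because $c_1=c_d$ would violate properness at $v_1$.
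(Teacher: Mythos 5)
Your proof is correct and, at its core, runs on the same mechanism as the paper's: an imprimitive vertex coloring forces a path of length at least two whose color word has at most one letter of odd parity, contradicting the toggle-word criterion. Your $d=1$ case is precisely the paper's argument (your word $c'c_1c'$ is the paper's three-edge path $i',i,j,j'$). Where you genuinely diverge is in not assuming that a within-block edge exists: the paper asserts, ``because $G$ is connected,'' that some edge joins two vertices of the same block and stops there, whereas you handle $d\geq 2$ by an iterated reflection of the minimal path. Two remarks. First, the entire $d\geq 2$ case is vacuous for trees, by a short counting argument you could substitute for it: if no edge of the tree were monochromatic in the vertex coloring, condition (i) together with properness would force the edges of a fixed color between two adjacent classes to form a perfect matching, so all classes would share a common size $m>1$ dividing both $n$ and $n-1$, which is impossible. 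This observation collapses your proof to the $d=1$ case and also supplies the justification that the paper's one-line assertion is missing (connectivity alone does not suffice: the two-colored $4$-cycle admits such a block system with no monochromatic edge, so the tree structure is really being used). Second, if you keep the $d\geq 2$ argument, two small repairs are needed: $d$ must be taken as the minimum same-block distance over \emph{all} blocks, not just over $B$, so that the descent steps (a $c$-neighbor landing inside its own block gives a pair at distance $1$, and the $c_1=c_d$ coincidence gives a pair at distance $d-2$) genuinely contradict minimality or terminate in the $d=1$ base case; and the claim that ``each color $c_i$ with $i\leq k$ or $i>d-k$ appears exactly twice'' should be phrased in terms of indices rather than colors, since distinct edges of the path may share a color --- the correct and sufficient conclusion is that at most one index has multiplicity one in the extended word, hence at most one color has odd total parity.
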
 \begin{proof}
    Suppose $\G_\kappa$ were not primitive. Then there would be at least two blocks in a system of imprimitivity. Because $G$ is connected, there must be some edge colored $a$ connecting two vertices $i$ and $j$ in the same block. Furthermore, there must be an edge colored $b$ connecting $i$ to a vertex $i'$ in another block. By the definition of an imprimitive coloring, there must then be an edge also colored $b$ connecting $j$ to some $j'$ in the same block as $i'$. Then the path $i',i,j,j'$ has word $aba$, which is not a toggle word, contradicting the assumption that $\G_\kappa$ is a generalized toggle group.
\end{proof}

This result also follows from a more general result of Bloom and Saracino~\cite[Theorem 4.2]{bloom2023transitive}, specifically that an imprimitive toggle group $T(L)$ containing a cycle on every element of $L$ will have that $L$ has the structure of a product of sets, which must contain a cycle. Additionally Theorem~\ref{thm:toggleTreePrim} allows us to apply \Cref{thm:primitive_trees}(iii) to the particular case of a generalized toggle group presented by an edge coloring of a path.

\begin{theorem} Let $(G,\kappa)$ be a proper edge coloring of a path. If $\G_\kappa$ is a generalized toggle group, then one of the following statements holds:
    \begin{enumerate}
        \item[(i)] $n=\frac{q^d-1}{q-1}$ for $q$ a prime power, $d\geq2$, and $PGL(d,q)\subseteq \G_\kappa\subseteq P\Gamma L(d,q)$.
        \item[(ii)] $\G_\kappa\cong M_{23}$, the Mathieu group. 
        \item[(iii)] $\G_\kappa\cong S_n$.
    \end{enumerate}
\end{theorem}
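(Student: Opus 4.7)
The plan is to combine \Cref{thm:toggleTreePrim}, which gives primitivity of $\G_\kappa$, with \Cref{thm:primitive_trees} applied to the entire path (so $T=G$ and $n-m=0$), using the toggle-word necessary condition from the preceding theorem both to rule out the two-color case and to separate $S_n$ from $A_n$ in the conclusion. I would first handle the tiny cases: for $n\le 3$ a proper edge coloring of $P_n$ is essentially rigid (one color for $n=2$, or a two-color alternation for $n=3$) and a direct computation gives $\G_\kappa\cong S_n$, which is case (iii). So assume $n\ge 4$ from here on.

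Next, I would show that $\kappa$ must use at least three colors. A proper edge coloring of $P_n$ with only two colors is forced to alternate $1,2,1,2,\dots$, so for $n\ge 4$ some subpath of length three has color word $121$ or $212$; this has only one letter of odd parity and hence is not a toggle word, contradicting that $\G_\kappa$ is a generalized toggle group (by the previous theorem). So $k\ge 3$, and I can invoke \Cref{thm:primitive_trees} with $T=G$, where $\ell=k>2$, $n-m=0$, and the required primitivity is supplied by \Cref{thm:toggleTreePrim}. Only the $n-m=0$ branch applies, leaving three possibilities: either $A_n\le \G_\kappa$, or $n=\frac{q^d-1}{q-1}$ with $PGL_d(q)\le \G_\kappa\le P\Gamma L_d(q)$, or $n=23$ with $\G_\kappa=M_{23}$. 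The latter two already match cases (i) and (ii) of the desired statement.

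The main (and final) step is to upgrade the possibility $A_n\le \G_\kappa$ to the equality $\G_\kappa=S_n$, and this is where I expect the real subtlety to lie. I would reapply the toggle-word condition, this time to the entire color word along $G$, which as a contiguous subword of itself must contain at least two letters occurring with odd parity. Thus some color class $a$ has an odd number of edges, so $\tau_a$ is a product of an odd number of disjoint transpositions and is therefore an odd permutation. Consequently $\G_\kappa\not\subseteq A_n$, which together with $A_n\le \G_\kappa$ forces $\G_\kappa=S_n$, placing us in case (iii). The main obstacle is really just recognizing this parity consequence of the toggle-word condition as the right device to distinguish $S_n$ from $A_n$; the heavy group-theoretic work is all inherited from \Cref{thm:primitive_trees}, and the argument then reduces to a case analysis.
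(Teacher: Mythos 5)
Your proof is correct and follows the same route the paper intends: the paper gives no explicit proof of this theorem, simply remarking that \Cref{thm:toggleTreePrim} supplies the primitivity needed to invoke \Cref{thm:primitive_trees}(iii) with $T=G$ and $n-m=0$. You additionally supply two details the paper leaves implicit --- ruling out the two-color (forced alternating) case via the toggle-word condition so that $\ell>2$, and using the parity of the full color word (at least two colors of odd multiplicity, hence an odd generator) to upgrade $A_n\le\G_\kappa$ to $\G_\kappa\cong S_n$ --- both of which are correct and genuinely needed to reach the stated conclusion.
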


\subsection{Independence Posets}
When Cameron and Fon-der-Flaass originally described the toggle group, they were essentially considering it as a group structure on a distributive lattice. While the generalized toggle group is an extension of these order ideal toggles from a group-theoretic perspective to subsets of a finite set, there has historically been considerably more focus on trying to understand generalizations of this family of lattices from a combinatorial as well as order-theoretic point of view. One particular family that serves as a generalization is that of trim lattices, first introduced by Thomas in~\cite{thomas2006analogue}. Before we define these lattices, recall that an element $x$ of a lattice $L$ is \emph{left modular} if for any $y < z \in L$ the following equality holds $y \vee (x \wedge z)= (y\vee x)\wedge z$. 
\begin{definition}[{\cite{thomas2006analogue}}]
    A lattice is \emph{trim} if it has a maximal chain of $(n+1)$ left modular elements as well as $n$ join-irreducible elements and $n$ meet irreducible elements.
\end{definition} 
These lattices were further generalized by Thomas with Williams to a family of posets known as independence posets, which contains many posets of combinatorial and geometric interest. Importantly  the set of independence posets which are lattices is precisely the set of trim lattices. As such all of the following are examples of independence posets, as explained in~\cite{thomas2019Independence}. 
\begin{itemize}
    \item Distributive lattices
    \item Tamari Lattices
    \item Cambrian Lattices
    \item Fuss-Cambrian Lattices
\end{itemize}
as well as many more.
To be self-contained, we follow the treatment of ~\cite{thomas2019Independence} to define these objects, up to the omission of proofs.

For the remainder of the background on independence posets, let $G$ be a directed acyclic graph. The associated $G$-order is the partial order on $V(G)$ where $g_1\ge g_2$ when there is a directed path from $g_1$ to $g_2$. Recall that an independent set of $G$ is a subset where no two elements are adjacent. 
To define an independence poset we first need the following definitions. 

\begin{definition}[{\cite[Definition 1.2]{thomas2019Independence}}]
    A pair ($D$, $U$) of independent sets of $G$ is called \emph{orthogonal} if there is no edge in $G$ from an element of $D$ to an element of $U$. An orthogonal pair of independent sets ($D$, $U$) is called \emph{tight} if whenever any element of $D$ is increased (removed and replaced by a larger element with respect to the $G$-order), any element of $U$ is decreased, or a new element is added to either $D$ or $U$, then the result is no longer an orthogonal pair of independent sets. We abbreviate tight orthogonal pair by $\top$, and we write $\top(G)$ for the set of all tops of $G$.
\end{definition}

It turns out that when $G$ is directed and acyclic then specifying either of $U$ or $D$ gives a bijection to the independent sets of $G$. This result is more formally stated as follows. 

\begin{theorem}[{\cite[Theorem 1.2]{thomas2019Independence}}]
    Let $I$ be an independent set of a directed acyclic graph $G$. Then there exists a unique $(I, U) \in \top(G)$ and a unique $(D, I) \in \top(G)$.
\end{theorem}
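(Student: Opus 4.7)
By reversing all directed edges of $G$, the two claims become equivalent (swapping the roles of $D$ and $U$), so it suffices to prove existence and uniqueness of $U$ with $(I, U) \in \top(G)$.

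For existence, my plan is to build $U$ via a greedy algorithm. Fix any linear extension of the $G$-order on $V(G)$ and process the vertices from $G$-order minimal to $G$-order maximal, starting from $U = \emptyset$. At each vertex $v$, include $v$ in $U$ exactly when $v \notin I$, no $d \in I$ has an edge $d \to v$, and $v$ has no outgoing edge to any vertex currently in $U$. The third condition, together with the processing order, forces $U$ to be independent (any offending edge would have been caught when its later endpoint was processed), and the second condition makes $(I, U)$ orthogonal. Checking the four tightness conditions is then a case analysis: in the \emph{add a vertex $v$ to $U$} case, whichever greedy condition failed for $v$ is precisely the needed obstruction; in the \emph{increase $d \in I$ to $d' > d$} case (and dually for \emph{decrease $u \in U$ to $u' < u$}), one follows the directed path from $d'$ to $d$ and invokes the greedy invariant to locate an obstructing element of $U$ along it; and the subtlest case, \emph{add $v$ to $I$}, requires showing that any $v \notin I$ not adjacent to $I$ is either already in $U$ or has an outgoing edge to some element of $U$.

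For uniqueness, suppose $(I, U_1)$ and $(I, U_2)$ are both tight with $U_1 \neq U_2$. Pick a $G$-order maximal element $v$ of the symmetric difference $U_1 \triangle U_2$ and, without loss of generality, assume $v \in U_1 \setminus U_2$. Applying tightness of $(I, U_2)$ to the ``add $v$ to $U_2$'' operation, orthogonality cannot be the obstruction (since $v \in U_1$ and $(I, U_1)$ is orthogonal), so independence of $U_2 \cup \{v\}$ must fail: some $w \in U_2$ is $G$-adjacent to $v$. Independence of $U_1$ gives $w \notin U_1$, hence $w \in U_2 \setminus U_1 \subseteq U_1 \triangle U_2$, and maximality of $v$ forbids an edge $w \to v$, forcing instead $v \to w$ with $w <_G v$. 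Iterating alternately on $(I, U_1)$ and $(I, U_2)$, and ruling out the ``$u^{(n+1)} \in U_1 \cap U_2$'' case each time using independence, one aims to build an infinite strictly $G$-order-decreasing walk $v = u^{(0)} \to u^{(1)} \to u^{(2)} \to \cdots$, contradicting the finiteness of the DAG $G$.

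The principal obstacle is this uniqueness step. After the first application, the direction of the induced edge between successive terms of the sequence is no longer forced by a simple maximality argument, since the secondary terms need not be maximal in $U_1 \triangle U_2$; guaranteeing a strictly descending walk therefore requires a refined invariant, chosen inductively as a secondary maximality among the remaining elements of $U_1 \triangle U_2$ or as a combinatorial weight that strictly decreases at each step. A related subtlety in the existence proof lies in the ``add $v$ to $I$'' tightness case, which depends on a precise characterization of why each vertex was skipped by the greedy and what obstruction that skip certifies against enlarging $I$.
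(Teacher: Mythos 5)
A preliminary point of comparison: the paper does not prove this statement. It is imported verbatim from Thomas and Williams (\cite{thomas2019Independence}, Theorem 1.2), and the surrounding text explicitly presents the background on independence posets ``up to the omission of proofs.'' So there is no in-paper argument to measure yours against, and what follows assesses your sketch on its own terms. Your overall strategy---a greedy construction for existence and an extremal-counterexample argument for uniqueness---is plausible in outline, but both halves have genuine gaps, one of which you correctly flag yourself.

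The uniqueness step breaks exactly where you suspect, and the failure is concrete. The first application of tightness is fine: maximality of $v$ in $U_1\triangle U_2$ forces the obstructing edge to be oriented $v\to w$ with $w\in U_2\setminus U_1$ and $w<_G v$. But at the next step you apply tightness of $(I,U_1)$ to ``add $w$ to $U_1$'' and conclude only that \emph{some} $x\in U_1$ is adjacent to $w$---and $v$ itself is such an $x$, since $v\in U_1$ and $v\to w$. Tightness is therefore already witnessed by the edge you came in on, the walk can backtrack to $v$, and no strictly $G$-decreasing sequence is produced. A ``secondary maximality'' choice does not obviously repair this, because the obstructing vertex is handed to you by the tightness condition rather than chosen by you. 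In the existence half, the ``add $v$ to $I$'' case has an unhandled subcase: when the greedy algorithm has placed $v$ in $U$, independence of $U$ means there is no edge from $v$ to any other element of $U$, so if $I\cup\{v\}$ is independent then $(I\cup\{v\},U)$ is still an orthogonal pair of independent sets under the definitions as quoted, and tightness fails unless one invokes a disjointness convention on $D$ and $U$ that the quoted definitions do not state; the ``increase $d$'' case is likewise asserted rather than argued. A more robust route is to induct on $|G|$ by removing an extremal element $g$ and using the recursion the paper itself quotes later (\Cref{lem:extreme_top_containment} and \Cref{thm:decomposition}), which reduces both existence and uniqueness for $G$ to the same statements for the smaller graphs $G_g$ and $G_g^{\circ}$; for the full details you should consult the proof in \cite{thomas2019Independence}.
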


To state the last required definition, let $\ell$ be a fixed linear extension of the $G$-order and let $\ell'$ be a fixed dual linear extension of the $G$-order. 

\begin{definition}[{\cite[Definition 1.3]{thomas2019Independence}}]
    The \emph{flip} of $(D, U) \in \top(G)$ at an element $g \in G$ is the tight orthogonal pair $flip_g(D, U)$ defined as follows: if $g\notin D$ and $g \notin U$, the flip does nothing. Otherwise, preserve all elements of $D$ that are not less than $g$ and all elements of $U$ that are not greater than $g$ (and delete all other elements); after switching the set to which $g$ belongs, then greedily complete $D$ and $U$ to a tight orthogonal pair in the orders $\ell'$ and $\ell$, respectively.
\end{definition}

These flips are all well defined, and in fact involutions, as shown in~\cite[Lemma 3.2]{thomas2019Independence}. One additional consequence of~\cite[Lemma 3.2]{thomas2019Independence} that we mention now is that if $g,h$ are incomparable elements of a $G$-order, then the associated flips commute. All together an independence poset is defined as follows. 

\begin{definition}[{\cite[Definition 1.4]{thomas2019Independence}}]
    The \emph{independence poset} on $\top(G)$ is the reflexive and transitive closure of the relations $(D, U)\lessdot (D', U')$ if there is some $g \in U$ such that $flip_g (D, U) = (D', U'). $
\end{definition}

For full details that this is in fact well defined as a poset, see~\cite[Lemma 3.3]{thomas2019Independence}. The flips then naturally label the edges in the Hasse diagram of $\top(G)$ by elements of $G$. Importantly for us, this is clearly a proper edge labeling of the Hasse diagram, so one can then ask about the coloring group generated by these labels. We can fully describe these groups with the following result. 
\begin{example}
    \label{fig:ind_pos}
     The independence poset on the increasing orientation of $P_3$
     
    \centering
\begin{tikzpicture}
    \node (1) at (-2, .5) {1};
    \node (2) at (0, .5) {2};
    \node (3) at (2, .5) {3};
    \draw[->, thick] (1) -- (2);
    \draw[->, thick] (2) -- (3);
    \node (e) at (6,-1) {$(\emptyset, 13)$};
    \node (3') at (4,-.25) {$(3, 2)$};
    \node (2') at (4,1) {$(2,1)$};
    \node (1') at (8,.375) {$(1, 3)$};
    \node (13') at (6,2) {$(13,\emptyset)$};
    \draw [-] (e) --node[below right]{1} (1');
    \draw [-] (e) --node[below left]{3} (3');
    \draw [-] (1') --node[above]{3} (13');
    \draw [-] (3') --node[left]{2} (2');
    \draw [-] (2') --node[above left]{1} (13');
\end{tikzpicture}
\end{example}

\begin{theorem}\label{thm:IndPosStruct}
    Let $P=\top(G)$ be an independence poset with $|P|=n$.
    
    \begin{itemize}
        \item If $G$ is connected, then the coloring group generated by the labeling of the Hasse diagram of $P$ contains $A_n$.
        \item If $G$ is not connected, then the coloring group is isomorphic to the direct product of the coloring groups restricted to the connected components.
    \end{itemize}
\end{theorem}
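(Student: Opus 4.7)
The plan is to treat the two bullets separately.

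For the disconnected case, it suffices by induction to handle $G = G_1 \sqcup G_2$. I would show that tight orthogonal pairs of $G$ are in natural bijection with $\top(G_1) \times \top(G_2)$, obtained by restricting $(D, U)$ to the vertex set of each component. Since $G$ has no edges between the components, orthogonality decouples; the tightness moves (increasing an element of $D$, decreasing an element of $U$, or adding a new element to either set) are likewise confined to a single component. A flip $flip_g$ for $g \in V(G_i)$ therefore modifies only the $G_i$-coordinate, as the greedy completion step in the definition of a flip cannot cross between components. Under the bijection, each generator $\tau_g$ of $\G_\kappa$ acts as an involution on one factor and as the identity on the other, so the subgroup of $S_{\top(G_1)\times \top(G_2)}$ these generators produce is exactly the internal direct product of the two component coloring groups.

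For the connected case, the strategy is to establish that $\G_\kappa$ is primitive and then combine a long cycle with a small-support element to force $A_n \le \G_\kappa$. Transitivity of $\G_\kappa$ on $\top(G)$ follows from the connectedness of the Hasse diagram. For primitivity, I would argue the contrapositive using the imprimitive vertex coloring framework from the definition preceding Example~\ref{ex:imprimitive_path}: a nontrivial block system would, via condition (i) combined with the fact that flips at incomparable elements of $G$ commute, induce a partition of $V(G)$ into pieces with no cross-edges, contradicting connectedness. To produce a long cycle, I would exhibit a subset $S \subseteq V(G)$ whose induced edges in the Hasse diagram form a spanning tree on $\top(G)$, so that Lemma~\ref{lem:long_cycle} yields an $n$-cycle in $\G_\kappa$. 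Theorem~\ref{thm:primitive_trees} then narrows the options for $\G_\kappa$ to $A_n$, $S_n$, or one of the listed almost-simple exceptional groups.

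The main obstacle is twofold. First, rigorously identifying a color subset $S$ whose Hasse-diagram edges span a tree requires exploiting the flip machinery more deeply than we can sketch here; a natural candidate comes from the flips along a maximum chain from $\hat 0$ to $\hat 1$, whose covers already form a long path in the Hasse diagram and can be augmented to a spanning tree. Second, eliminating the exceptional primitive groups $P\Gamma L_d(q)$ and the Mathieu groups will likely require producing a second element of $\G_\kappa$ with an incompatible cycle type (for instance a transposition arising from two commuting flips that differ on a single pair), or applying the order bound of Lemma~\ref{lem:size_bound} to a carefully chosen substructure, much in the spirit of the case analysis at the end of the proof of Theorem~\ref{thm:primitive_trees}.
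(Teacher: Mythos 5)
Your treatment of the disconnected case matches the paper's: elements lying in different components of $G$ are incomparable in the $G$-order, so their flips commute and $\top(G)$ factors as a product on which the generators act coordinatewise. The problem is the connected case, where your route cannot be completed. Even if every intermediate step were granted, the endgame fails: with a long cycle fixing $n-m=0$ points, \Cref{thm:primitive_trees}(iii) does not conclude $A_n\leq\G_\kappa$ but leaves open $PGL_d(q)\leq\G_\kappa\leq P\Gamma L_d(q)$ and $M_{23}$, and you give no mechanism for excluding these. The paper itself cannot exclude the $M_{23}$ case even for trees (it only conjectures its nonoccurrence), so ``much in the spirit of the case analysis at the end of the proof of \Cref{thm:primitive_trees}'' is not a plan that closes. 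The earlier steps are also unsupported. Your primitivity argument conflates two different objects: an imprimitive vertex coloring lives on the vertices of the Hasse diagram, i.e.\ on $\top(G)$, and there is no evident way that a block system on $\top(G)$ ``induces a partition of $V(G)$''; moreover \Cref{thm:toggleTreePrim} is proved only for trees, and Hasse diagrams of independence posets are generally not trees (the $P_3$ example in the paper is a $5$-cycle). Finally, the existence of a subset of colors whose Hasse-diagram edges form a spanning tree of $\top(G)$ --- which you need in order to invoke \Cref{lem:long_cycle} with $n-m=0$ --- is asserted as a hope, not proved, and is the load-bearing step.

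The paper avoids all of this by a different induction. It generalizes Striker's inductively toggle-alternating machinery to coloring groups (\Cref{def:ind_color_alt} and \Cref{thm:inductive_color_alternating}) and verifies the hypotheses for independence posets directly: choose an extremal element $g$ of the $G$-order with $G_g$ connected; by \Cref{lem:extreme_top_containment} the sets $G_1=\top_g(G)$ and $G_2=\top^g(G)$ partition $\top(G)$ with only $g$-colored Hasse edges crossing between them; by \Cref{lem:down} one gets $flip_g(G_2)\subseteq G_1$ and $|G_1|>|G_2|$; and by \Cref{thm:decomposition} the piece $G_1\simeq\top(G_g)$ is again the independence poset of a smaller connected graph, so induction applies. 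Conjugating the alternating group supported on $G_1$ by $flip_g$ then generates at least $A_n$. If you want to salvage your approach, you would need to supply the spanning-tree color subset, a correct primitivity proof, and a way to kill the projective and Mathieu exceptions; the inductive route sidesteps all three.
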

 The first statement requires nontrivial machinery which we now explain. 
 
 To prove this, we first recall the following definitions due to Striker~\cite{striker2018rowmotion}. The first is that a toggle group is said to be \emph{toggle-alternating} if it contains the alternating group. The second and more involved sequence of definitions are those needed for a collection to be \emph{inductively toggle-alternating}.

\begin{definition}[{\cite[Definition 2.20]{striker2018rowmotion}}]
Given $e\in E$ and $L\subseteq 2^E$, define $L_e:=\{X\in L \ | \ e\in X\}$ to consist of all the subsets in $L$ that contain $e$ and $L_{\thickbar{e}}:=\{X\in L \ | \ e\notin X\}$  to be all the subsets in $L$ that do not contain $e$. For any subset $S\subseteq L$, let $\tau_e(S)=\left\{\tau_e(X) \mid X\in S\right\}$.

\end{definition}
\begin{definition}[{\cite[Definition 2.5]{striker2018rowmotion}}]
Given $L\subseteq 2^E$, let an \emph{essentialization} of $(L,E)$ be $(L',E')$ where $E'$ and $L'$ are as follows. Every $e\in E'$ must be an element $e\in E$ such that there exist $X,Y\in L$ with $e\in X$ and $e\not\in Y$. That is, no element of $E'$ may be contained in all or none of the subsets in $L$. Also, consider subsets $Z\subseteq E$ with $|Z|>1$ such that if $e\in Z$ and $e\in W\in L$, then $Z\subseteq W$. For any maximal such $Z$, we exclude all elements of $Z$ except one from $E'$. Let $L'=\{X\cap E' \ | \ X\in L\}$. 
\end{definition}
\begin{definition}[{\cite[Definition 2.21]{striker2018rowmotion}}]
\label{def:pleasant}
Define the collection of \emph{inductively toggle-alternating} sets $L$ to be the smallest collection of $L\subseteq 2^E$, with essentializations $(L',E')$, such that:
\begin{enumerate}
\item 
If $|E'|\leq 4$, then $T(L)$ is toggle-alternating.
\item If $|E'|\geq 5$, 
then there exists $e\in E'$ such that at least one of the following holds:
\begin{enumerate}
\item
$L_e$ is inductively toggle-alternating, $L_e\cup \tau_e(L_e) = L$, and $L_e\cap \tau_e(L_e) \neq \emptyset$, or
\item $L_{\thickbar{e}}$ is inductively toggle-alternating, $L_{\thickbar{e}}\cup \tau_e(L_{\thickbar{e}}) = L$, and $L_{\thickbar{e}}\cap \tau_e(L_{\thickbar{e}}) \neq \emptyset$.
\end{enumerate}
\end{enumerate}
\end{definition}

This definition allows for the statement of the following result, which we will generalize.

\begin{theorem}[{\cite[Theorem 2.22]{striker2018rowmotion}}]
\label{thm:toggpstructure}
If $L$ is inductively toggle-alternating, then it is toggle-alternating.
\end{theorem}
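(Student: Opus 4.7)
The plan is to argue by strong induction on the size $|E'|$ of the essentialization of $(L,E)$. The base case $|E'|\le 4$ is exactly clause~(1) of Definition~\ref{def:pleasant}. For $|E'|\ge 5$, clause~(2) furnishes some $e\in E'$ satisfying (a) or (b); these cases are symmetric (swap the roles of $L_e$ and $L_{\bar{e}}$), so I would assume (a) holds. Since every set in $L_e$ contains $e$, this element is excluded from the essentialization of $L_e$, so $|E'_{L_e}|<|E'|$ and the inductive hypothesis yields $T(L_e)\supseteq A_{L_e}$.

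The crucial structural observation is that $L$ partitions as $L_e\sqcup L_{\bar{e}}$ and every toggle $\tau_f$ with $f\ne e$ preserves this partition setwise, since it cannot alter membership of $e$. Moreover, for $X\in L_e$ and $f\ne e$, the condition $\tau_f(X)\in L$ coincides with $\tau_f(X)\in L_e$, so $\tau_f$ restricted to $L_e$ is exactly the toggle $\tau_f^{L_e}$ of $T(L_e)$. Therefore the subgroup $\tilde H\le T(L)$ generated by $\{\tau_f:f\ne e\}$ surjects onto $T(L_e)$ under restriction, and every element of $A_{L_e}$ lifts to some $\tilde\sigma\in\tilde H$ acting as it on $L_e$ (and as some element of $T(L_{\bar{e}})$ on $L_{\bar{e}}$). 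The remaining generator $\tau_e$ is constrained by the two hypotheses of (a): $L_e\cup\tau_e(L_e)=L$ forces $\tau_e$ to send each $Y\in L_{\bar{e}}$ to $Y\cup\{e\}\in L_e$, while $L_e\cap\tau_e(L_e)\ne\emptyset$ guarantees a fixed point $X_0\in L_e$ of $\tau_e$ (a set with $X_0\setminus\{e\}\notin L$).

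With these tools, the strategy is to apply the classical criterion that a primitive subgroup of $S_L$ containing a $3$-cycle must contain $A_L$. For primitivity of $T(L)$: using that $A_{L_e}\le\tilde H|_{L_e}$ is $2$-transitive and hence primitive on $L_e$, any block of imprimitivity $B$ that meets $L_e$ must satisfy $B\cap L_e\in\{\{X\},L_e\}$; both cases are ruled out by applying $\tau_e$ and using the fixed point $X_0$ together with the relation $\tau_e(L_{\bar{e}})\subseteq L_e$, and the case $B\subseteq L_{\bar{e}}$ is handled similarly by translating $B$ across $\tau_e$. For the $3$-cycle, lift some $(X_0,X_1,X_2)\in A_{L_e}$ to $\tilde\sigma\in\tilde H$ and use its interplay with $\tau_e$ (via commutators, conjugation, and passage to powers) to eliminate the collateral action on $L_{\bar{e}}$ while keeping the three-point action anchored at $X_0$.

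The main obstacle is precisely this last step. The lift $\tilde\sigma$ acts as a $3$-cycle on $L_e$ but as an \emph{a priori} uncontrolled element of $T(L_{\bar{e}})$ on $L_{\bar{e}}$, so there is no formal reason the combined permutation has small support. The fixed point $X_0$ of $\tau_e$ is the lever that resolves this: because $\tau_e$ leaves $X_0$ in place while shuffling the remaining pairs, commutators such as $[\tilde\sigma,\tau_e]$ have limited support on $L_e$, and further brackets or power operations should annihilate the extraneous action on $L_{\bar{e}}$. Verifying that some such manipulation always yields a genuine $3$-cycle --- or another cycle short enough to invoke Jordan's theorem --- is the technical heart of the argument; it is also the step where the hypothesis $L_e\cap\tau_e(L_e)\ne\emptyset$ is indispensable, since without a fixed point of $\tau_e$ the anchoring needed to localize the support would be unavailable.
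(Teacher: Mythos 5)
Your setup is sound: the partition $L=L_e\sqcup L_{\bar e}$, the fact that each $\tau_f$ with $f\neq e$ preserves it and restricts to the corresponding toggle of $T(L_e)$, the fixed point of $\tau_e$ coming from $L_e\cap\tau_e(L_e)\neq\emptyset$, and the containment $\tau_e(L_{\bar e})\subseteq L_e$ are all correct and match the beginning of the argument the paper uses (in its generalized form, the proof of Theorem~\ref{thm:inductive_color_alternating}, which it states is essentially Striker's proof of this theorem). But the step you flag as "the technical heart" is a genuine gap, and the commutator/power manipulations you gesture at do not close it: a lift $\tilde\sigma$ of a $3$-cycle can act on $L_{\bar e}$ as an element of order divisible by $3$, so no power of it isolates the $L_e$-action, and there is no single bracketing recipe that is guaranteed to kill the collateral action while preserving a short cycle on $L_e$.

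The missing idea is a normality-plus-counting argument rather than element-by-element surgery. First note that the two hypotheses of clause (a) force $|L_e|>|L_{\bar e}|$: since $L_e\cup\tau_e(L_e)=L$ one gets $L_{\bar e}\subseteq\tau_e(L_e)$, and $L_e\cap\tau_e(L_e)\neq\emptyset$ makes this containment strict, so $|L_{\bar e}|<|\tau_e(L_e)|=|L_e|$. Now let $K\leq\tilde H$ be the subgroup fixing $L_{\bar e}$ pointwise; $K$ is the kernel of the restriction $\tilde H\to\operatorname{Sym}(L_{\bar e})$, hence normal, so its image in $\operatorname{Sym}(L_e)$ is normal in $\tilde H|_{L_e}\supseteq A_{L_e}$. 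If that image did not contain $A_{L_e}$, then by simplicity of $A_{L_e}$ (for $|L_e|\geq5$) the group $A_{L_e}$ would embed into a quotient of $\tilde H/K$ and hence into $\operatorname{Sym}(L_{\bar e})$, which is impossible since $|L_e|!/2>|L_{\bar e}|!$ when $|L_e|>|L_{\bar e}|$ and $|L_e|\geq 5$. Thus $K$ itself induces at least $A_{L_e}$ on $L_e$ while fixing $L_{\bar e}$ pointwise, which is exactly the supply of small-support elements you were trying to manufacture by hand. The conclusion then follows by combining $K$ with $\tau_eK\tau_e$, whose support $\tau_e(L_e)$ overlaps $L_e$ and covers $L$ with it; two alternating groups on overlapping sets whose union is everything generate at least the alternating group on the union, so Jordan's theorem is not needed. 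You would also need to say something about the case $|L_e|\leq4$ (where $A_{L_e}$ is not simple), which the paper handles with separate base cases.
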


Roughly, we will generalize the above definitions and the hypothesis of Theorem~\ref{thm:toggpstructure} from the setting of toggle groups to that of coloring groups. In the style of~\cite{striker2018rowmotion}, we now refer to a coloring group $\G_\kappa$, or equivalently the pair $(G,\kappa)$, as being \emph{color-alternating} if $\G_\kappa$ contains the alternating group on the vertex set of $G$. This then allows us to describe a straightforward generalization of inductively toggle-alternating. 

\begin{definition}\label{def:ind_color_alt}
    Let $F$ be a collection of connected graphs with proper surjective edge colorings such that the following hold. 
    \begin{enumerate}
        \item If $(G,\kappa)\in F$ with $|\kappa(E(G))| \le 4$ the group $\G_\kappa$ is color-alternating.
        \item If $(G,\kappa)\in F$ with $|\kappa(E(G))|\ge 5$ there is some color $i$ such that the vertices of $G$ can be partitioned into two disjoint subsets $G_1,G_2$ (thought of as induced subgraphs) such that \begin{enumerate}
            \item $\tau_i(G_1)\cap G_1 \neq \emptyset$,
            \item $\tau_i(G_1)\cup G_1 = G$, $(G_1,\kappa|_{G_1})\in F$,
            \item the only edges that connect $G_1$ and $G_2$ are colored $i$.
        \end{enumerate}
    \end{enumerate}

    We call the elements of $F$ inductively color-alternating.
\end{definition}
\begin{theorem}\label{thm:inductive_color_alternating}
    If $(G,\kappa)$ is inductively color-alternating, then $\G_\kappa$ is color-alternating.
\end{theorem}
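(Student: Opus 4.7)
The plan is to mirror the structure of Striker's proof of Theorem~\ref{thm:toggpstructure}, proceeding by strong induction on $|V(G)|$. The base case is immediate: if $|\kappa(E(G))| \le 4$, then $\G_\kappa$ is color-alternating by Definition~\ref{def:ind_color_alt}(1). For the inductive step, let $i$ be the color and $V(G) = V(G_1) \sqcup V(G_2)$ the partition (with $V(G_2)$ nonempty) supplied by Definition~\ref{def:ind_color_alt}(2). Since $(G_1, \kappa|_{G_1}) \in F$ has strictly fewer vertices, the inductive hypothesis yields $A_{V(G_1)} \le \G_{\kappa|_{G_1}}$, and the goal is to upgrade this to $A_{V(G)} \le \G_\kappa$.

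The key structural input is condition (c) of the definition: for every color $a \neq i$, the generator $\tau_a \in \G_\kappa$ decomposes as a disjoint product $\tau_a^{G_1} \cdot \tau_a^{G_2}$, so words in the non-$i$ generators preserve the partition setwise and, restricted to $V(G_1)$, yield elements of $\G_{\kappa|_{G_1}}$. Condition (b) says $\tau_i$ implements an injection $V(G_2) \hookrightarrow V(G_1)$, while condition (a) guarantees at least one vertex $w \in V(G_1)$ with $\tau_i(w) \in V(G_1)$. The strategy is to lift a $3$-cycle $(w, x, y) \in A_{V(G_1)}$ centered at such a $\tau_i$-fixed vertex $w$ to an element of $\G_\kappa$ that genuinely acts as a $3$-cycle on all of $V(G)$, then propagate it across $V(G)$ via conjugation by $\tau_i$ and the other generators, ultimately producing enough $3$-cycles to generate $A_{V(G)}$.

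The main obstacle is that the naive lift of a $3$-cycle from $\G_{\kappa|_{G_1}}$ to $\G_\kappa$ is spoiled whenever its expression requires the restricted generator coming from edges colored $i$ internal to $G_1$: the honest lift $\tau_i \in \G_\kappa$ also carries the cross-transpositions between $V(G_1)$ and $V(G_2)$. The plan for handling this is to combine the lift with additional carefully chosen conjugates by $\tau_i$ to cancel the spurious cross-action on $V(G_2)$, using the inductive hypothesis on $G_1$ to supply the required corrective elements fixing the chosen vertex $w$. Once a bona fide $3$-cycle on $V(G)$ is produced in $\G_\kappa$, transitivity is immediate from connectivity of $G$ (each orbit absorbs all neighbors of its vertices) and primitivity should follow by pulling back a hypothetical nontrivial block system on $V(G)$ along the matching supplied by $\tau_i$ to obtain one on $V(G_1)$, contradicting the $2$-transitivity of $A_{V(G_1)}$ on $V(G_1)$. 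The theorem then follows from Jordan's classical result that a primitive subgroup of $S_n$ containing a $3$-cycle contains $A_n$.
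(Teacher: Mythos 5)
Your high-level frame (induct, invoke the decomposition of Definition~\ref{def:ind_color_alt}, and exploit the fact that the generators $\tau_a$ with $a\neq i$ preserve the partition $V(G_1)\sqcup V(G_2)$) matches the paper, but the engine of your argument is different and has a genuine gap at its central step: producing an element of $\G_\kappa$ that acts as a $3$-cycle on $V(G_1)$ while fixing $V(G_2)$ pointwise. You correctly identify the obstacle --- a word in the full generators whose restriction to $V(G_1)$ is a $3$-cycle will in general act by some unwanted permutation on $V(G_2)$ --- but your proposed remedy (``combine the lift with additional carefully chosen conjugates by $\tau_i$ to cancel the spurious cross-action'') is not an argument. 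Conjugation by $\tau_i$ transports supports between $V(G_1)$ and $V(G_2)$; it does not annihilate the $V(G_2)$-component of an element, and you give no reason why the required corrective elements (acting trivially on $V(G_1)$ and by the prescribed permutation on $V(G_2)$) should exist in $\G_\kappa$. Their existence is precisely the hard part of the theorem.

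The paper closes this gap with the Cameron--Fon-der-Flaass simplicity argument, which you should adopt. Let $H\le\G_\kappa$ be generated by the $\tau_a$ with $a\neq i$; it preserves $V(G_1)$ and $V(G_2)$ setwise, and its image in $S_{V(G_1)}$ contains $A_{V(G_1)}$ by induction. Let $K$ be the kernel of the restriction $H\to S_{V(G_2)}$. The image of $K$ in $S_{V(G_1)}$ is normal in the image of $H$, so by simplicity of $A_{V(G_1)}$ either it contains $A_{V(G_1)}$ or $A_{V(G_1)}$ embeds in a quotient of $H/K$, a subgroup of $S_{V(G_2)}$; the latter is impossible by counting, because conditions (a) and (b) force $|V(G_2)|<|V(G_1)|$ --- a cardinality comparison your proposal records but never uses. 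Hence $K$ itself is a copy of (at least) $A_{V(G_1)}$ supported on $V(G_1)$, and the conjugate $K^{\tau_i}$ is a copy supported on $\tau_i(V(G_1))$; since these supports cover $V(G)$ and overlap, together they generate a group containing $A_{V(G)}$. Note that this is where conjugation by $\tau_i$ genuinely enters --- to spread the alternating group across $V(G)$, not to repair a broken $3$-cycle --- and it makes your transitivity/primitivity/Jordan endgame unnecessary. Finally, the simplicity step needs $|V(G_1)|\geq 5$, which is why the paper must separately handle exceptional base cases with $|V(G_1)|\leq 4$ but more than four colors; your base case covers only $|\kappa(E(G))|\le 4$ and leaves these unaddressed.
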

\begin{proof}
    The proof is almost exactly the proof of \ref{thm:toggpstructure} due to Striker in~\cite{striker2018rowmotion}, which is essentially the proof in the case of order ideals of Cameron and Fon-der-Flaass~\cite{cameron1995orbits}. 

    For the remainder of the proof, $C$ will denote the codomain of $\kappa$ (with $C'$ the associated codomain of $\kappa'$). After relabeling we will identify $C$ with $\{1,\dots,|C|\}$. 

    Note for $|C|\leq 4$, we have required in the definition of inductively color-alternating that $\G_\kappa$ be color-alternating (this is because in the proof below we need $A_{|V(G)|}$ to be simple, and the alternating group ${A}_n$ is simple for $n\geq 5$ but not for $n=4$), so the case that $|C|\leq4$ serves as one of our base cases. The other, which we call the exceptional base case, is when $|V(G)|= 4$ and $|C|>4$. In this exceptional base case, $G$ is either $K_4$ or $K_4$ with an edge removed. If $G= K_4$ minus an edge, and $|C|=5$ then $\G_\kappa$ is the symmetric group as every edge has a distinct color. If $G = K_4$ and $|C|=6$, then $\G_\kappa$ is the symmetric group for the same reason. If $G = K_4$ and $|C|=5$, then without loss of generality we can assume the edges $(1,2)$ and $(3,4)$ have the same color, as exactly one color is repeated. If we just ignore these two edges, the remaining edges form a connected graph on four vertices with all distinct edge colors, so the corresponding group is the symmetric group. We have shown that our base cases are all color-alternating.

    Now suppose $|C|\geq 5$. Also, suppose for all $(G',\kappa')\in F$ with $|C'|<|C|$ we have $(G',\kappa')$ is color-alternating. Let $i\in C$ be the color required by condition $(2)$ of Definition~\ref{def:ind_color_alt}, since $(G,\kappa)$ is inductively color-alternating. Following from the definition of inductively color-alternating, $G = G_1\cup G_2$, $|G_1| > |G_2| > 0$ as $G_1 \cup\tau_i(G_1) = G$ and $G_1\cap \tau_i(G_1)\neq \emptyset$ so $G_2$ is a proper subset of $\tau(G_1)$, the coloring pair obtained by considering $G_1$ as an induced subgraph of $G$ with the inherited proper edge coloring $(G_1, \kappa|_{G_1})= (G_1,\kappa')$ is inductively color-alternating, and $i\notin \kappa'(E(G_1))$. Importantly $\tau_i$ acts trivially on $G_1$ when the vertex set is restricted. Additionally we are here using the hypothesis that the only edges that connect $G_1$ and $G_2$ are colored $i$, so that the restricted action of the coloring of $\kappa'$ is the same as that of $\kappa$.  By induction, $\G_{\kappa'}$ contains the alternating group on $|G_1|$ elements. We first assume that $|G_1| \geq 5$, so $A_{|G_1|}$ is simple. Since $A_{G_1}$ is simple and $|G_1|>|G_2|$, the subgroup $K$ of $\G_\kappa$ fixing $G_2$ pointwise induces at least $A_{G_1}$ on $G_1$. If instead $|G_1| \le 4$ then $|C'| \le 4$, and we satisfy condition $(1)$ of Definition~\ref{def:ind_color_alt} or we are in an exceptional base case as no simple graph $G'$ on with $|G'|=4$ can have more than 6 edges and the cases where $|G'|=4$ and $|C|>4$ are the exceptional base cases. In either event the subgroup $K$ of $\G_\kappa$ fixing $G_2$ pointwise induces at least $A_{G_1}$ on $G_1$. 

Then in either case, $G_1\cup \tau_i(G_1) = G$ and $\tau_i(G_1)\cap G_1 \neq \emptyset$, so $K$ and the conjugate subgroup $K^{\tau_i}:=\left\{\tau_i k \tau_i \ | \ k\in K\right\}$ generate an alternating or symmetric group on~$G$. 
\end{proof}

Before proving Theorem~\ref{thm:IndPosStruct}, we mention a few more necessary results on the structure of independence posets from~\cite{thomas2019Independence}. The first is the \emph{tight orthogonal pair recursion}. For any $g\in G$, $\{g\}$ is an independent set, and there are unique $D, U$ such that $(D,\{g\})= m_g$ and $(\{g\},U)=j_g$ are tight orthogonal pairs. Let $\top_g(G) = [\hat{0},m_g]$ and $\top^g(G)=[j_g,\hat{1}]$. In the case where $g$ is an extremal (minimal or maximal) element of the $G$-order, these two intervals in fact partition the independence poset. Formally this is stated as follows.

\begin{lemma}[{\cite[Lemma 3.7]{thomas2019Independence}}]\label{lem:extreme_top_containment}
    Let $G$ be an acyclic directed graph. If $g$ is an extremal element of $G$ then $\top(G) = \top_g(G)\cup \top^g(G)$. Furthermore, 
    \begin{itemize}
        \item If $g$ is minimal, $(D,U)\in \top_g(G)$ if and only if $g\in U$.
        \item If $g$ is maximal, $(D,U)\in \top^g(G)$ if and only if $g\in D$.
    \end{itemize}
    In particular, if $x\in \top^g(G)$ and $y\in \top_g(G)$ then $x\not\leq y$.
\end{lemma}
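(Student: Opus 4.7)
The plan is to deduce all three assertions from a single identification, treating the $g$ minimal case explicitly (the $g$ maximal case follows by reversing the orientation of all edges in $G$ and swapping the roles of $D$ and $U$). The key claim is that the set $A_g := \{(D,U)\in\top(G) : g\in U\}$ coincides with $[\hat{0}, m_g] = \top_g(G)$, and that its complement in $\top(G)$ coincides with $[j_g,\hat{1}] = \top^g(G)$.

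First I would verify that $A_g$ is downward closed under the cover relation: let $(D',U')\lessdot(D,U)$ be realized by a flip at some $h\in U'$. The nontrivial direction is the contrapositive: if $g\notin U'$, then $g\notin U$. The three sub-cases---(a) $g\in D'$ with $g\parallel h$, (b) $g\in D'$ with $g<h$, and (c) $g\notin D'\cup U'$---each force $g\notin U$. In (a), $g$ is preserved in the new $D$ and thus cannot be in $U$ by disjointness of the two components of a tight orthogonal pair. In (b) and (c), $g$ is absent from the partial pair after the flip, and the greedy completion (run on $D$ first, then $U$) either places $g$ in $D$ (when $g$ has no parent in the current $D$) or leaves $g$ out entirely (when some parent of $g$ lies in $D$, which then blocks $g$ from being added to $U$ by the orthogonality constraint); crucially, $g$ minimal means all of its neighbors are parents, so this analysis is exhaustive. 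Combined with $m_g\in A_g$, downward closure yields $A_g\supseteq[\hat{0}, m_g]$.

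For the reverse inclusion I would construct an upward walk inside $A_g$ from any $(D,U)\in A_g$: while $|U|\ge 2$, flip at some $h\in U\setminus\{g\}$; by the same minimality-and-preservation reasoning applied in the upward direction ($g\ne h$ and $g\not> h$, so the flip rule keeps $g\in U$), the rank strictly increases and the walk terminates at the unique pair with $U=\{g\}$, which is $m_g$ by Theorem~1.2. Hence $A_g = \top_g(G)$, proving the first bullet; the second bullet follows symmetrically.

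For the partition statement I would show the complement $B_g := \top(G)\setminus A_g$ equals $[j_g,\hat{1}]$. It is upward closed and contains $j_g$ (since $g\in D$ of $j_g$ forces $g\notin U_g$ by disjointness), so $B_g\supseteq[j_g,\hat{1}]$. For the reverse inclusion I note that $\hat{0}\in A_g$: the $U$-component of $\hat{0} = (\emptyset, U^*)$ must contain $g$, for otherwise adding $g$ to the $D$-component would produce a legitimate extension (orthogonality is vacuous because $g$ has no outgoing edges), contradicting tightness. Hence any $(D,U)\in B_g$ has $D\ne\emptyset$; if $(D,U)\ne j_g$ then $D\ne\{g\}$, so some $h\in D\setminus\{g\}$ exists, and its downward flip stays in $B_g$ by the analogue of the downward-closure sub-case analysis above. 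Iterating yields a path down to $j_g$, so $B_g = [j_g,\hat{1}]$. The ``in particular'' clause then follows formally: if $x\in\top^g(G)$ and $y\in\top_g(G)$ satisfied $x\le y$, then $j_g\le y\le m_g$ would put $j_g\in\top_g(G)$, forcing $g\in U$ of $j_g$ by the first bullet, which contradicts $g\in D$ of $j_g$. The main obstacle throughout is the greedy-completion subcase analysis used for downward closure, as ruling out a spurious reinsertion of $g$ into $U$ demands careful tracking of the $D$-first/$U$-second completion order in concert with the orthogonality constraint.
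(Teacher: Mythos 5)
This lemma is not proved in the paper at all: it is quoted verbatim from Thomas and Williams (their Lemma 3.7), and the authors explicitly write that they ``do not repeat the proof,'' extracting only the single feature that $flip_{g'}$ preserves $\top_g(G)$ and $\top^g(G)$ for $g'\neq g$. So there is no in-paper proof to compare against; the relevant benchmark is the cited source. Measured against that, your reconstruction is sound and follows the natural route: identify $\top_g(G)$ (for $g$ minimal) with $\{(D,U):g\in U\}$ by showing this set is downward closed and admits an internal upward walk to $m_g$, and dually for the complement and $j_g$. Note that your downward-closure analysis in fact proves exactly the ``essential feature'' the authors extract, so your argument subsumes what the paper actually uses. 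Two dependencies deserve explicit acknowledgment, since the paper's condensed restatement of the Thomas--Williams definitions does not supply them. First, you repeatedly invoke $D\cap U=\emptyset$ for a tight orthogonal pair; this is part of the definition of orthogonality in the original source but is omitted from Definition 4.13 as restated here, and without it your sub-case (a) and the final contradiction at $j_g$ do not close. Second, your greedy-completion dichotomy (``$g$ is either absorbed into $D$ or blocked from both sides'') requires both that $D$ is completed before $U$ and that $\ell'$ processes all neighbors of the minimal element $g$ before $g$ itself; if the completion were $U$-first or interleaved, $g$ could be reinserted into $U$ and the argument would fail. Both conventions do hold in the source, so these are gaps in justification rather than errors. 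One small slip: your sub-case (a) is stated as ``$g\in D'$ with $g\parallel h$,'' but the preservation criterion is ``$g$ not less than $h$''; these coincide only because $g$ is minimal, which you should say.
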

While we do not repeat the proof of Lemma~\ref{lem:extreme_top_containment} we do note that one essential feature of the argument shows that when $g$ is an extremal element, for any $g'\neq g$, both $flip_{g'}(\top_g(G))=\top_g(G)$ and $flip_{g'}(\top^g(G)) = \top^g(G)$. In particular this provides potential candidates for $i$, $G_1$, and $G_2$ to apply Theorem~\ref{thm:inductive_color_alternating}. What then remains is to understand the action of $flip_g$ when $g$ is extremal. To that end we use the following results of~\cite{thomas2019Independence}.

\begin{lemma}[{\cite[Lemma 3.9]{thomas2019Independence}}]
Let $G$ be a directed acyclic graph.
\begin{itemize}
\item If $g$ is minimal and $(D,U) \in \top_g(G)$, then $flip_g(D,U)=(D\cup \{g\},U')$ for some $U'$.
\item If $g$ is maximal and $(D,U) \in \top^g(G)$, then $flip_g(D,U)=(D',U\cup\{g\})$ for some $D'$.
\end{itemize}
\label{lem:down}
\end{lemma}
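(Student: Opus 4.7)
The plan is to prove the minimal case in detail; the maximal case then follows by reversing the orientation of $G$, which swaps $\top_g\leftrightarrow\top^g$, exchanges the roles of $D$ and $U$ in the definition of flip, and interchanges the two bullets in the statement.

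So assume $g$ is minimal and $(D,U)\in\top_g(G)$. By~\Cref{lem:extreme_top_containment}, $g\in U$. I first trace through the preservation/switch phase of $flip_g(D,U)$ in the forward direction. Minimality of $g$ rules out any $d\in D$ with $d<g$, so the step ``preserve elements of $D$ not less than $g$'' retains every element of $D$, and the step ``preserve elements of $U$ not greater than $g$'' retains $\{u\in U:u\not>g\}$. After the switch of $g$ from $U$ to $D$, the partial pair is $(D\cup\{g\},U_0)$ with $U_0=\{u\in U:u\not\geq g\}$. Writing $flip_g(D,U)=(D^*,U^*)$, and noting the greedy completion only augments the preserved portions, we already have $D^*\supseteq D\cup\{g\}$.

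To obtain the matching inclusion $D^*\subseteq D\cup\{g\}$, I plan to use the involutivity of $flip_g$ (recorded in the excerpt as a consequence of Lemma~3.2 of~\cite{thomas2019Independence}). By that, $flip_g(D^*,U^*)=(D,U)$. Now $g\in D^*$, so running the preservation/switch phase in this reverse direction: minimality of $g$ again rules out any element of $D^*$ strictly below $g$, so every element of $D^*$ is preserved, but the switch now moves $g$ from $D^*$ to $U^*$. The partial $D$-component is therefore $D^*\setminus\{g\}$. Since greedy completion only enlarges, the final $D$-component $D$ of this second flip satisfies $D\supseteq D^*\setminus\{g\}$, equivalently $D^*\subseteq D\cup\{g\}$. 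Combining the two inclusions gives $D^*=D\cup\{g\}$; taking $U'=U^*$ yields $flip_g(D,U)=(D\cup\{g\},U')$, as required.

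\textbf{Expected main obstacle.} The whole argument rests on two applications of the same trivial observation: when $g$ is minimal, the ``delete elements of $D$ less than $g$'' part of the flip is vacuous. The real subtlety is simply having involutivity of $flip_g$ available; it is logically prior to this lemma in~\cite{thomas2019Independence}, but one should check it is not itself established using this lemma. If that dependency check fails, the fallback is a direct uniqueness argument using Theorem~1.2 of~\cite{thomas2019Independence}: $D\cup\{g\}$ is independent (minimality of $g$ forbids any outgoing edge, and an incoming edge from $D$ would contradict orthogonality of $(D,U)$), so there is a unique tight pair $(D\cup\{g\},U')$, and one would identify the greedy completion of $(D\cup\{g\},U_0)$ with it by tracing the $\ell'$- and $\ell$-orderings. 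This alternative avoids involutivity entirely at the cost of a more delicate examination of the greedy procedure.
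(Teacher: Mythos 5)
This lemma is quoted in the paper directly from \cite[Lemma 3.9]{thomas2019Independence} with no proof supplied, so there is no in-paper argument to compare against; I can only assess your proposal on its own terms. On those terms it is correct and cleanly organized. The two inclusions are both instances of the same observation — minimality of $g$ makes the ``delete elements of $D$ below $g$'' step vacuous, and greedy completion only enlarges the preserved partial pair — and the reverse inclusion via involutivity of $flip_g$ is legitimate: in \cite{thomas2019Independence} the involution property is Lemma~3.2, which precedes Lemma~3.9 and is proved independently of it, so the dependency check you flag does succeed and the fallback via Theorem~1.2 is not needed. Two small points worth making explicit if you write this up: first, that the preserved partial pair is indeed contained in its greedy completion is part of the well-definedness of flips (also Lemma~3.2), which you are implicitly invoking when you write $D^*\supseteq D\cup\{g\}$; second, the duality for the maximal case should note that reversing all edges of $G$ swaps orthogonality of $(D,U)$ with orthogonality of $(U,D)$ and exchanges the linear extensions $\ell$ and $\ell'$, so that the flip operation is genuinely carried to the flip operation of the reversed graph. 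With those remarks the argument is complete.
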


Before stating the last result, we mention the following notation. If $G$ is a directed acyclic graph and $g\in G$, $G_g$ is $G\setminus \{g\}$ and $G_g^\circ$ is $G\setminus(\{g\}\cup\{g'|(g,g') \text{ or } (g',g)\in E(G)\}) $.

\begin{theorem}[{\cite[Theorem 3.10]{thomas2019Independence}}]
\label{thm:decomposition}
Let $g$ be an extremal element of an acyclic directed graph $G$.  Then \begin{align*} (D,U)&\mapsto (D ,U \setminus \{g\}\} \text{ is a bijection } \begin{cases} \top_g(G)\simeq\top(G_g^\circ) & \text{if } g \text{ minimal} \\ \top_g(G)\simeq\top(G_g) & \text{if } g \text{ maximal} \end{cases} \\
  (D,U) &\mapsto (D \setminus \{g\},U\}) \text{ is a bijection } \begin{cases} \top^g(G)\simeq\top(G_g) & \text{if } g \text{ minimal} \\ \top^g(G)\simeq\top(G_g^\circ) & \text{if } g \text{ maximal} \end{cases}\end{align*}
\end{theorem}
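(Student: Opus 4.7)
The plan is to construct explicit inverse maps and verify that both directions preserve the defining structural conditions (independence, orthogonality, and tightness). By duality it suffices to treat the two minimal cases; the maximal cases follow by reversing all edges, which swaps the roles of $D$ and $U$, of $\top_g$ and $\top^g$, and of $G_g^\circ$ and $G_g$.

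I would start with the case $g$ minimal and prove $\top_g(G) \simeq \top(G_g^\circ)$ via $(D,U) \mapsto (D, U \setminus \{g\})$. Since $g$ is minimal it is a sink in the DAG, so every neighbor $h$ of $g$ carries an edge $h \to g$. If $h \in U$, this edge puts two adjacent vertices in the independent set $U$; if $h \in D$, this same edge violates orthogonality with $g \in U$. Hence no neighbor of $g$ lies in $D \cup U$, so $D$ and $U \setminus \{g\}$ are supported inside $V(G_g^\circ)$. Independence and orthogonality in $G_g^\circ$ are inherited since $G_g^\circ$ is an induced subgraph of $G$. For the inverse, given $(D',U') \in \top(G_g^\circ)$ define $(D', U' \cup \{g\})$; here $U' \cup \{g\}$ is independent because $U'$ avoids every neighbor of $g$, and orthogonality holds because $g$ has no outgoing edges and so cannot receive an edge from $D'$. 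It is immediate from Lemma~\ref{lem:extreme_top_containment} that $(D', U'\cup\{g\}) \in \top_g(G)$ once tightness is established, and the two assignments are manifestly mutually inverse.

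The substantive content is the transfer of tightness in both directions, and I expect this to be the main obstacle. The guiding principle is that any \emph{move} witnessing non-tightness (increasing a $D$-element, decreasing a $U$-element, or adding a fresh element to $D$ or $U$) in one graph can be transported to a move in the other. Going downward, if $(D, U\setminus\{g\})$ admitted such a move in $G_g^\circ$, the new vertex involved lies in $V(G_g^\circ)$ and therefore is not adjacent to $g$; hence the same move is legal in $G$ after restoring $g$ to $U$, contradicting tightness of $(D,U)$. Going upward, suppose $(D', U'\cup\{g\})$ admits a move in $G$ using some vertex $x$. If $x$ is not $g$ and not a neighbor of $g$, the move restricts to $G_g^\circ$ and contradicts tightness of $(D',U')$. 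If $x = g$, minimality forces $g$ to be incomparable with the element being adjusted or to fail the orthogonality test with $D'$ for the same reason as above. If $x$ is a neighbor of $g$, the edge $x \to g$ immediately destroys either independence of the new $U$ or orthogonality with $g$, so no such move exists.

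Once this is done, the same argument handles $\top^g(G) \simeq \top(G_g)$ for $g$ minimal via $(D,U) \mapsto (D \setminus \{g\}, U)$, with one difference: when $g$ is minimal and $g \in D$, a neighbor $h$ of $g$ (with edge $h \to g$) can lie in $U$ (the edge points from $D$ to $D$, not from $D$ to $U$), but cannot lie in $D$ by independence. Hence neighbors of $g$ need not be deleted, explaining why the target is $G_g$ rather than $G_g^\circ$; the well-definedness and tightness arguments are otherwise parallel. The two maximal statements then follow by the edge-reversal duality noted above, completing the four bijections.
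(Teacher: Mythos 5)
First, a point of comparison: the paper does not prove this statement at all --- it is imported verbatim as \cite[Theorem~3.10]{thomas2019Independence} and used as a black box --- so your attempt has to stand on its own rather than be measured against an internal argument. Your overall plan (explicit maps, explicit inverses, transfer of tightness, reduce the maximal cases to the minimal ones by reversing all edges) is sensible, and your well-definedness argument for the first bijection is correct: for $g$ minimal, $g\in U$ forces $N(g)\cap(D\cup U)=\emptyset$, so the image genuinely lands in $G_g^\circ$.

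There are, however, two concrete gaps. The more serious one is in the second bijection. For $g$ \emph{minimal}, Lemma~\ref{lem:extreme_top_containment} characterizes $\top^g(G)$ by $g\notin U$, not by $g\in D$ (the latter characterization holds only when $g$ is maximal). So a top in $\top^g(G)$ need not contain $g$ in its $D$ at all --- in \Cref{fig:ind_pos}, $(2,1)$ lies in $\top^3(G)$ with $3\notin D$ --- the map $(D,U)\mapsto(D\setminus\{g\},U)$ is often the identity, and the inverse is not ``adjoin $g$ to $D$.'' Given $(D',U')\in\top(G_g)$ you must decide whether or not to adjoin $g$ and prove that exactly one of $(D',U')$ and $(D'\cup\{g\},U')$ is a top of $G$ lying in $\top^g(G)$; this is exactly the content that separates the $G_g$ cases from the $G_g^\circ$ cases, and ``otherwise parallel'' does not supply it. The second gap is in the tightness transfer: the increase/decrease moves are defined relative to the reachability order of the ambient graph, and the $G_g^\circ$-order is in general strictly coarser than the restriction of the $G$-order, since a path witnessing $d<_G d'$ may route through a deleted neighbor of $g$. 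A replacement move legal in $G$ therefore need not be legal in $G_g^\circ$, so ``the move restricts'' fails as stated. It is repairable --- if $d'$ is not adjacent to $d$, the weaker move of simply adding $d'$ already violates tightness of $(D',U')$; if $d'$ is adjacent to $d$, acyclicity forces the edge $d'\to d$, so the comparability survives in the induced subgraph --- but that argument is absent. You should also pin down the move ``add $g$ to $D$'' when $g$ already lies in $U$: with $g$ a sink and $N(g)\cap(D\cup U)=\emptyset$, nothing in your sketch rules it out, and handling it requires being explicit about the disjointness convention for orthogonal pairs.
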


This recursive structure description is the final tool we need.

\begin{proof}[of Theorem~\ref{thm:IndPosStruct}]
    We note that the element $\tau_i$ induced by the proper edge coloring of the Hasse diagram of $\top(G)$ coming from the flips is exactly $flip_i$ so we will continue to use $flip_i$.
    To show the first statement, notice that if $G$ is connected, so is the Hasse diagram of the $G$-order. Particularly, the Hasse diagram of the $G$-order is the subgraph of $G$ obtained by deleting all edges $u,v$ if there is a longer path from $u$ to $v$. 
    
    Recall that for a finite connected poset $P$, there is always some extremal element $g$ for which the poset obtained by removing $g$ is connected. Importantly if $P$ is a $G$-order then the poset $P'$ obtained by the deletion of this extremal element $g$ is a $G_g$-order, so $G_g$ is still connected as a graph. 

    Now to show that the coloring groups of independence posets with connected directed acyclic graphs are inductively color-alternating, we show the inductive step, so we assume $|G| > 4$. The base cases where $|G| \le 4$ were checked by computer and the code is included in the repository linked in \Cref{sec:data}. 
    
    First choose an extremal element $g$ such that the Hasse diagram of the $G$-order is connected. We can assume $g$ is maximal, as otherwise we can swap the role of $\top_g(G)$ and $\top^g(G)$, so we have $G_1 = \top_g(G)$ and $G_2 = \top^g(G)$. Note that $|G_1|> |G_2|$ as $G_1$ is in bijection with independent sets of $G$ that do not contain $g$, whereas $G_2$ is in bijection with independent sets of $G$ that do not contain $g$ or a neighbor of $g$.
    
    By Lemma~\ref{lem:extreme_top_containment} we know that $G_1$ and $G_2$ partition the Hasse diagram of $\top(G)$.  Additionally by Lemma~\ref{lem:down} and Lemma~\ref{lem:extreme_top_containment} we have that $flip_g(G_2)\subseteq G_1$, so $G_1 \cap flip_g(G_1) \neq \emptyset$ as $|G_1|>|G_2|$. Furthermore by the discussion preceding Lemma~\ref{lem:down} we know that the only edges connecting $G_1$ and $G_2$ are colored by $g$. Finally note that by the choice of $g$ and Theorem~\ref{thm:decomposition}, $G_1 = \top_g(G)\simeq\top(G_g)$ which is an independence poset of a connected directed acyclic graph with $|G_g| < |G|$. As such the family of independence posets from connected directed acyclic graphs with coloring given by the labels of the flips is inductively color-alternating, and thus color alternating. 

    To prove the second part, note that no two elements of a connected component of $G$ are comparable in the $G$-order, so their associated flips commute. As such the coloring group is immediately seen to be isomorphic to the direct product of the groups of the connected components.
\end{proof}

\appendix

\section{Constructions of Coloring Groups on Trees}\label{sec:constructions}

In this appendix, we present some particular constructions of $S_n,A_n,D_n,$ and (when $n$ is even,) $B_{n/2}$ as coloring groups on trees of size $n$.

\begin{example} For a connected graph of order $n$, let $(G,\kappa)$ be the edge coloring that gives each edge a distinct color. Then it's clear that $\G_\kappa=S_n$.
\end{example}

On the other extreme, it's possible to construct $S_n$ with very few colors.

\begin{example}
    Given any proper edge coloring $(T,\kappa)$ of a tree $T$ of order $n$, let $\kappa'$ be the coloring obtained by changing the color of an edge incident to a leaf to a new unique color. The symmetric edge condition in \Cref{thm:symmetric_edge} guarantees that $\G_{\kappa'}=S_n$. In this way we can realize the symmetric group $S_n$ on any tree of order $n$ with just $\Delta(T)+1$ colors, where $\Delta(T)$ is the maximum degree of $T$. In particular, we can realize any symmetric group on a path with three colors. For example, the coloring \begin{center}
        \begin{tikzpicture}[scale = .75]
        \node (1) at (-9, 0) {};
        \node (2) at (-7, 0) {};
        \node (3) at (-5, 0) {};
        \node (4) at (-3, 0) {};
        \node (5) at (-1,0) {};
        \node (6) at (1, 0) {};
        \node (7) at (3, 0) {};
        \node (8) at (5, 0) {};
        \node (9) at (7, 0) {};
        \node (10) at (9,0) {};
        \draw[color = c1, thick] (1) -- (2);
        \draw[color = c2, thick] (3) -- (2);
        \draw[color = c1, thick] (3) -- (4);
        \draw[color = c2, thick] (5) -- (4);
        \draw[color = c1, thick] (5) -- (6);
        \draw[color = c2, thick] (6) -- (7);
        \draw[color = c1, thick] (7) -- (8);
        \draw[color = c2, thick] (8) -- (9);
        \draw[color = c3, thick] (9) -- (10);
        \draw[color = black, fill] (1) circle (2pt);
        \draw[color = black, fill] (2) circle (2pt);
        \draw[color = black, fill] (3) circle (2pt);
        \draw[color = black, fill] (4) circle (2pt);
        \draw[color = black, fill] (5) circle (2pt);
        \draw[color = black, fill] (6) circle (2pt);
        \draw[color = black, fill] (7) circle (2pt);
        \draw[color = black, fill] (8) circle (2pt);
        \draw[color = black, fill] (9) circle (2pt);
        \draw[color = black, fill] (10) circle (2pt);
    \end{tikzpicture}
    \end{center} has corresponding coloring group $S_{10}$.
\end{example}

In order to realize $A_n$ as a coloring group, all colors must appear on an even number of edges. In particular, this means that the order $n$ of our tree must be odd. It turns out that we can realize $A_n$ as a coloring group on a tree whenever $n>5$ and $n$ is odd. \begin{example} Color the path graph $P_{2k+1}$ by $1,2,\dots,k,1,2,\dots,k$. Then $(\tau_1\tau_k)^2$ is a three-cycle. Label the vertices so that $(\tau_1\tau_k)^2=(1,2,3)$. Then any product $\sigma$ of all the generators which ends in $\tau_k\tau_1\tau_2$ sends $1\to2\to3$. Label the remaining vertices so that $\sigma(i)=i+1$. The long cycle $\sigma$ and the three-cycle $(\tau_1\tau_k)^2$ generate all of $A_n$. For example, \begin{center}
        \begin{tikzpicture}[scale = .75]
        \node (1) at (-9, 0) {};
        \node (2) at (-7, 0) {};
        \node (3) at (-5, 0) {};
        \node (4) at (-3, 0) {};
        \node (5) at (-1,0) {};
        \node (6) at (1, 0) {};
        \node (7) at (3, 0) {};
        \node (8) at (5, 0) {};
        \node (9) at (7, 0) {};
        \draw[color = c1, thick] (1) -- (2);
        \draw[color = c2, thick] (3) -- (2);
        \draw[color = c3, thick] (3) -- (4);
        \draw[color = c4, thick] (5) -- (4);
        \draw[color = c1, thick] (5) -- (6);
        \draw[color = c2, thick] (6) -- (7);
        \draw[color = c3, thick] (7) -- (8);
        \draw[color = c4, thick] (8) -- (9);
        \draw[color = black, fill] (1) circle (2pt);
        \draw[color = black, fill] (2) circle (2pt);
        \draw[color = black, fill] (3) circle (2pt);
        \draw[color = black, fill] (4) circle (2pt);
        \draw[color = black, fill] (5) circle (2pt);
        \draw[color = black, fill] (6) circle (2pt);
        \draw[color = black, fill] (7) circle (2pt);
        \draw[color = black, fill] (8) circle (2pt);
        \draw[color = black, fill] (9) circle (2pt);
    \end{tikzpicture}
    \end{center} has corresponding coloring group $A_9$.

    Note that this construction fails for $n=5$ because, as we'll see in the following example, the coloring we obtain must be $D_5$.
\end{example} 

We can construct $D_n$ on the path graph with the minimal proper edge coloring.

\begin{example}
    The proper edge coloring of $P_n$ with two colors yields the dihedral group $D_n$. A nice way to see this is to consider the vertices of a regular $n$-gon and connect them into a path in a zig-zag fashion like so:

    \begin{center}
        \begin{tikzpicture}[scale = .75]
        \node (1) at (0, 2) {};
        \node (2) at (-1.18, 1.62) {};
        \node (3) at (1.18, 1.62) {};
        \node (4) at (-1.9, .62) {};
        \node (5) at (1.9, .62) {};
        \node (6) at (-1.9, -.62) {};
        \node (7) at (1.9, -.62) {};
        \node (8) at (-1.18, -1.62) {};
        \node (9) at (1.18, -1.62) {};
        \node (10) at (0,-2) {};
        \draw[color = c1, thick] (1) -- (2);
        \draw[color = c2, thick] (3) -- (2);
        \draw[color = c1, thick] (3) -- (4);
        \draw[color = c2, thick] (5) -- (4);
        \draw[color = c1, thick] (5) -- (6);
        \draw[color = c2, thick] (6) -- (7);
        \draw[color = c1, thick] (7) -- (8);
        \draw[color = c2, thick] (8) -- (9);
        \draw[color = c1, thick] (9) -- (10);
        \draw[color = black, fill] (1) circle (2pt);
        \draw[color = black, fill] (2) circle (2pt);
        \draw[color = black, fill] (3) circle (2pt);
        \draw[color = black, fill] (4) circle (2pt);
        \draw[color = black, fill] (5) circle (2pt);
        \draw[color = black, fill] (6) circle (2pt);
        \draw[color = black, fill] (7) circle (2pt);
        \draw[color = black, fill] (8) circle (2pt);
        \draw[color = black, fill] (9) circle (2pt);
        \draw[color = black, fill] (10) circle (2pt);
    \end{tikzpicture}
    \end{center}

    From this illustration, we can clearly see that the two generators correspond to two reflections that generate $D_n$.
\end{example}

By the proof of \cref{thm:signed_permutations}, we can construct the group $B_m$ of signed permutations on a tree of order $n=2m$ by joining two identical trees whose corresponding coloring group is the symmetric group $S_m$ with an edge of a unique color.

\begin{example}
    Let $(T,\kappa)$ be the proper edge coloring of the star graph on $n$ vertices. Connect two copies of $T$ at their centers by a new edge colored $n+1$. The resulting color group is $B_n$. For example, the coloring group of \begin{center}
         \begin{tikzpicture}[scale = .75]
        \node (1) at (-1, 0) {};
        \node (2) at (-2.18, 1.62) {};
        \node (3) at (2.18, 1.62) {};
        \node (4) at (-2.9, .62) {};
        \node (5) at (2.9, .62) {};
        \node (6) at (-2.9, -.62) {};
        \node (7) at (2.9, -.62) {};
        \node (8) at (-2.18, -1.62) {};
        \node (9) at (2.18, -1.62) {};
        \node (10) at (1,0) {};
        \draw[color = c1, thick] (1) -- (2);
        \draw[color = c2, thick] (1) -- (4);
        \draw[color = c3, thick] (1) -- (6);
        \draw[color = c4, thick] (1) -- (8);
        \draw[color = c1, thick] (10) -- (3);
        \draw[color = c2, thick] (10) -- (5);
        \draw[color = c3, thick] (10) -- (7);
        \draw[color = c4, thick] (10) -- (9);
        \draw[color = black, thick] (1) -- (10);
        \draw[color = black, fill] (1) circle (2pt);
        \draw[color = black, fill] (2) circle (2pt);
        \draw[color = black, fill] (3) circle (2pt);
        \draw[color = black, fill] (4) circle (2pt);
        \draw[color = black, fill] (5) circle (2pt);
        \draw[color = black, fill] (6) circle (2pt);
        \draw[color = black, fill] (7) circle (2pt);
        \draw[color = black, fill] (8) circle (2pt);
        \draw[color = black, fill] (9) circle (2pt);
        \draw[color = black, fill] (10) circle (2pt);
    \end{tikzpicture}
    \end{center} is $B_4$.
\end{example}

\section{Coloring Groups on Small Trees}\label{sec:data}

The following table lists the coloring groups presented by proper edge colorings of trees up to 12 vertices computed using Sage\footnote{Code available at \url{https://github.com/wilsoa/Coloring-Groups}} \cite{sagemath}. Groups isomorphic to $S_n, A_n, D_n$ or $B_m$ are omitted as their constructions are detailed above. The description in each row comes from GAP's \cite{GAP4} \texttt{structure\_description} method. A sample coloring realizing the group is given in the table either as a sequence of edge colors on a path or as a reference to another example if it is not possible to realize that group on a path.

\begin{center}
    \begin{tabular}{c|c|c|c|c}
        Degree & Description & Order & Primitive? & Coloring\\\hline
        7 & $GL(3,2)$ & 168 & Yes & \Cref{ex:gl32} \\ 
        9 & $(C_3 \times C_3 \times C_3) \rtimes S4$ & 648 & No & \Cref{ex:648}\\
        10& $(C_5 \times C_5) \rtimes D_4$ & 200 & No &1,2,1,3,1,2,1,3,1\\
         & $C_2 \times S_5$ & 240 & No & 1,2,1,3,1,3,1,2,1\\
        & $(A_5 \times A_5) \rtimes (C_2 \times C_2)$ & 14400 & No & 1,2,1,2,1,3,1,3,1\\
        12 & $(C_3 \times C_3) \rtimes ((C_2)^4\rtimes C_2)$ & 288 & No & 1,2,1,3,1,2,1,3,1,2,1\\
        & $(C_2 \times C_2) \rtimes ((C_2)^4 \rtimes D_6)$ & 768 & No & 1,2,3,2,1,2,1,2,3,2,1\\
        & $(C_3 \times C_3 \times C_3) \rtimes (C_2 \times S_4)$ & 1296 & No & 1,2,3,2,3,2,1,2,1,2,3\\
        & $(C_4 \times C_4 \times C_4) \rtimes S_4$ & 1536 & No & \Cref{ex:symmetric_tree} \\
        & $(C_2)^2 \rtimes ((C_2)^4 \rtimes ((S_3)^2 \rtimes C_2))$ & 4608 & No & 1,2,1,3,1,3,1,3,1,2,1\\
        & $(C_3)^4 \rtimes ((C_4 \times C_2) \rtimes D_4)$ & 5184 & No & \Cref{ex:5184} \\
        & $({C_2})^6 \rtimes (((C_3)^2 \rtimes C_3) \rtimes (C_2)^2)$ & 6912 & No & \Cref{ex:imprimitive_tree} \\
        & $(C_3)^4 \rtimes ((C_2)^3 \rtimes S_4)$ & 15552 & No & 1,2,3,4,3,2,1,4,1,2,3\\
        & $((C_2)^5 \rtimes A_6) \rtimes C_2$ & 23040 & No & \Cref{ex:23040}\\
        & $(C_3)^4 \rtimes ((C_2)^4 \rtimes S_4)$ & 31104 & No & \Cref{ex:31104} \\
        & $(C_2)^6 \rtimes ((C_3)^3 \rtimes (C_2 \times S_4))$ & 82944 & No & \Cref{ex:82944}\\
        & $(A_6 \times A_6) \rtimes D_4$ & 1036800 & No & 1,2,1,2,1,3,1,3,1,3,1
    \end{tabular}
\end{center}

Finally, we collect example colorings for groups in the above table which have not yet appeared.

\begin{example}\label{ex:648} Coloring realizing the group of degree 9 and order 648:
\begin{center}
    \begin{tikzpicture}[scale = .75]
        \node (1) at (-9, 0) {};
        \node (2) at (-7, 0) {};
        \node (3) at (-5, 0) {};
        \node (4) at (-3, 0) {};
        \node (5) at (-1,0) {};
        \node (6) at (1, 0) {};
        \node (7) at (-11, 0) {};
        \node (8) at (-9, -1.7) {};
        \node (9) at (-7, -1.7) {};
        \draw[color = c1, thick] (1) -- (2);
        \draw[color = c2, thick] (3) -- (2);
        \draw[color = c1, thick] (3) -- (4);
        \draw[color = c2, thick] (5) -- (4);
        \draw[color = c3, thick] (5) -- (6);
        \draw[color = c2, thick] (1) -- (7);
        \draw[color = c3, thick] (1) -- (8);
        \draw[color = c3, thick] (2) -- (9);
        \draw[color = black, fill] (1) circle (2pt);
        \draw[color = black, fill] (2) circle (2pt);
        \draw[color = black, fill] (3) circle (2pt);
        \draw[color = black, fill] (4) circle (2pt);
        \draw[color = black, fill] (5) circle (2pt);
        \draw[color = black, fill] (6) circle (2pt);
        \draw[color = black, fill] (7) circle (2pt);
        \draw[color = black, fill] (8) circle (2pt);
        \draw[color = black, fill] (9) circle (2pt);
    \end{tikzpicture}
\end{center}
\end{example}

\begin{example}\label{ex:5184} Coloring realizing the group of degree 12 and order 5184:
\begin{center}
        \begin{tikzpicture}[scale = .75]
        \node (1) at (-9, 0) {};
        \node (2) at (-7, 0) {};
        \node (3) at (-5, 0) {};
        \node (4) at (-3, 0) {};
        \node (5) at (-1,0) {};
        \node (6) at (1, 0) {};
        \node (7) at (3, 0) {};
        \node (8) at (5, 0) {};
        \node (9) at (7, 0) {};
        \node (10) at (1,-1.7) {};
        \node (11) at (1,-3.4) {};
        \node (12) at (3,-1.7) {};
        \draw[color = c1, thick] (1) -- (2);
        \draw[color = c2, thick] (3) -- (2);
        \draw[color = c1, thick] (3) -- (4);
        \draw[color = c3, thick] (5) -- (4);
        \draw[color = c1, thick] (5) -- (6);
        \draw[color = c3, thick] (6) -- (7);
        \draw[color = c2, thick] (7) -- (8);
        \draw[color = c1, thick] (8) -- (9);
        \draw[color = c2, thick] (6) -- (10);
        \draw[color = c1, thick] (10) -- (11);
        \draw[color = c1, thick] (7) -- (12);
        \draw[color = black, fill] (1) circle (2pt);
        \draw[color = black, fill] (2) circle (2pt);
        \draw[color = black, fill] (3) circle (2pt);
        \draw[color = black, fill] (4) circle (2pt);
        \draw[color = black, fill] (5) circle (2pt);
        \draw[color = black, fill] (6) circle (2pt);
        \draw[color = black, fill] (7) circle (2pt);
        \draw[color = black, fill] (8) circle (2pt);
        \draw[color = black, fill] (9) circle (2pt);
        \draw[color = black, fill] (10) circle (2pt);
        \draw[color = black, fill] (11) circle (2pt);
        \draw[color = black, fill] (12) circle (2pt);
    \end{tikzpicture}
\end{center}
\end{example}

\begin{example}\label{ex:23040} Coloring realizing the group of degree 12 and order 23040:
\begin{center}
        \begin{tikzpicture}[scale = .75]
        \node (1) at (-9, 0) {};
        \node (2) at (-7, 0) {};
        \node (3) at (-5, 0) {};
        \node (4) at (-3, 0) {};
        \node (5) at (-1,0) {};
        \node (6) at (1, 0) {};
        \node (7) at (3, 0) {};
        \node (8) at (5, 0) {};
        \node (9) at (7, 0) {};
        \node (10) at (9,0) {};
        \node (11) at (-5,-1.7) {};
        \node (12) at (5,-1.7) {};
        \draw[color = c1, thick] (1) -- (2);
        \draw[color = c2, thick] (3) -- (2);
        \draw[color = c3, thick] (3) -- (4);
        \draw[color = c2, thick] (5) -- (4);
        \draw[color = c3, thick] (5) -- (6);
        \draw[color = c2, thick] (6) -- (7);
        \draw[color = c3, thick] (7) -- (8);
        \draw[color = c2, thick] (8) -- (9);
        \draw[color = c1, thick] (9) -- (10);
        \draw[color = c1, thick] (3) -- (11);
        \draw[color = c1, thick] (8) -- (12);
        \draw[color = black, fill] (1) circle (2pt);
        \draw[color = black, fill] (2) circle (2pt);
        \draw[color = black, fill] (3) circle (2pt);
        \draw[color = black, fill] (4) circle (2pt);
        \draw[color = black, fill] (5) circle (2pt);
        \draw[color = black, fill] (6) circle (2pt);
        \draw[color = black, fill] (7) circle (2pt);
        \draw[color = black, fill] (8) circle (2pt);
        \draw[color = black, fill] (9) circle (2pt);
        \draw[color = black, fill] (10) circle (2pt);
        \draw[color = black, fill] (11) circle (2pt);
        \draw[color = black, fill] (12) circle (2pt);
    \end{tikzpicture}
\end{center}
\end{example}

\begin{example}\label{ex:31104} Coloring realizing the group of degree 12 and order 31104:
\begin{center}
        \begin{tikzpicture}[scale = .75]
        \node (1) at (-9, 0) {};
        \node (2) at (-7, 0) {};
        \node (3) at (-5, 0) {};
        \node (4) at (-3, 0) {};
        \node (5) at (-1,0) {};
        \node (6) at (1, 0) {};
        \node (7) at (3, 0) {};
        \node (8) at (-9, -1.7) {};
        \node (9) at (-11, 0) {};
        \node (10) at (5,0) {};
        \node (11) at (3,-1.7) {};
        \node (12) at (-3,-1.7) {};
        \draw[color = c1, thick] (1) -- (2);
        \draw[color = c2, thick] (3) -- (2);
        \draw[color = c1, thick] (3) -- (4);
        \draw[color = c2, thick] (5) -- (4);
        \draw[color = c1, thick] (5) -- (6);
        \draw[color = c2, thick] (6) -- (7);
        \draw[color = c3, thick] (1) -- (8);
        \draw[color = c2, thick] (1) -- (9);
        \draw[color = c1, thick] (7) -- (10);
        \draw[color = c3, thick] (7) -- (11);
        \draw[color = c3, thick] (4) -- (12);
        \draw[color = black, fill] (1) circle (2pt);
        \draw[color = black, fill] (2) circle (2pt);
        \draw[color = black, fill] (3) circle (2pt);
        \draw[color = black, fill] (4) circle (2pt);
        \draw[color = black, fill] (5) circle (2pt);
        \draw[color = black, fill] (6) circle (2pt);
        \draw[color = black, fill] (7) circle (2pt);
        \draw[color = black, fill] (8) circle (2pt);
        \draw[color = black, fill] (9) circle (2pt);
        \draw[color = black, fill] (10) circle (2pt);
        \draw[color = black, fill] (11) circle (2pt);
        \draw[color = black, fill] (12) circle (2pt);
    \end{tikzpicture}
\end{center}
\end{example}

\begin{example}\label{ex:82944} Coloring realizing the group of degree 12 and order 82944:
\begin{center}
        \begin{tikzpicture}[scale = .75]
        \node (1) at (-9, 0) {};
        \node (2) at (-7, 0) {};
        \node (3) at (-5, 0) {};
        \node (4) at (-3, 0) {};
        \node (5) at (-1,0) {};
        \node (6) at (1, 0) {};
        \node (7) at (3, 0) {};
        \node (8) at (5, 0) {};
        \node (9) at (7, 0) {};
        \node (10) at (9,0) {};
        \node (11) at (-1,-1.7) {};
        \node (12) at (1,-1.7) {};
        \draw[color = c1, thick] (1) -- (2);
        \draw[color = c2, thick] (3) -- (2);
        \draw[color = c1, thick] (3) -- (4);
        \draw[color = c2, thick] (5) -- (4);
        \draw[color = c3, thick] (5) -- (6);
        \draw[color = c2, thick] (6) -- (7);
        \draw[color = c3, thick] (7) -- (8);
        \draw[color = c2, thick] (8) -- (9);
        \draw[color = c1, thick] (9) -- (10);
        \draw[color = c1, thick] (5) -- (11);
        \draw[color = c1, thick] (6) -- (12);
        \draw[color = black, fill] (1) circle (2pt);
        \draw[color = black, fill] (2) circle (2pt);
        \draw[color = black, fill] (3) circle (2pt);
        \draw[color = black, fill] (4) circle (2pt);
        \draw[color = black, fill] (5) circle (2pt);
        \draw[color = black, fill] (6) circle (2pt);
        \draw[color = black, fill] (7) circle (2pt);
        \draw[color = black, fill] (8) circle (2pt);
        \draw[color = black, fill] (9) circle (2pt);
        \draw[color = black, fill] (10) circle (2pt);
        \draw[color = black, fill] (11) circle (2pt);
        \draw[color = black, fill] (12) circle (2pt);
    \end{tikzpicture}
\end{center}
    
\end{example}

\acknowledgements
The authors would like to thank Jonathan Bloom and Jessica Striker for their guidance and many, many helpful conversations. They would also like to thank Nate Lesnevich for his insights on an early version of this project.

\nocite{*}
\bibliographystyle{plain}
\bibliography{CopyEdited}

\begin{thebibliography}{10}

\bibitem{bloom2023transitive}
Jonathan~S. Bloom and Dan Saracino.
\newblock Transitive generalized toggle groups containing a cycle.
\newblock 2023.

\bibitem{cameron1995orbits}
P.~J. Cameron and D.~G. Fon-Der-Flaass.
\newblock Orbits of antichains revisited.
\newblock {\em European J. Combin.}, 16(6):545--554, 1995.

\bibitem{einstein2021combinatorial}
David Einstein and James Propp.
\newblock Combinatorial, piecewise-linear, and birational homomesy for products
  of two chains.
\newblock {\em Algebraic Combinatorics}, 4(2):201--224, 2021.

\bibitem{GAP4}
The GAP~Group.
\newblock {\em {GAP -- Groups, Algorithms, and Programming, Version 4.12.2}},
  2022.

\bibitem{grinberg2015iterative}
Darij Grinberg and Tom Roby.
\newblock Iterative properties of birational rowmotion {II}: rectangles and
  triangles.
\newblock {\em Electron. J. Combin.}, 22(3):Paper 3.40, 49, 2015.

\bibitem{grinberg2016iterative}
Darij Grinberg and Tom Roby.
\newblock Iterative properties of birational rowmotion {I}: generalities and
  skeletal posets.
\newblock {\em Electron. J. Combin.}, 23(1):Paper 1.33, 40, 2016.

\bibitem{jones2014primitive}
Gareth~A. Jones.
\newblock Primitive permutation groups containing a cycle.
\newblock {\em Bull. Aust. Math. Soc.}, 89(1):159--165, 2014.

\bibitem{striker2018rowmotion}
Jessica Striker.
\newblock Rowmotion and generalized toggle groups.
\newblock {\em Discrete Math. Theor. Comput. Sci.}, 20(1):Paper No. 17, 26,
  2018.

\bibitem{striker2012promotion}
Jessica Striker and Nathan Williams.
\newblock Promotion and rowmotion.
\newblock {\em European J. Combin.}, 33(8):1919--1942, 2012.

\bibitem{sagemath}
{The Sage Developers}.
\newblock {\em {S}ageMath, the {S}age {M}athematics {S}oftware {S}ystem
  ({V}ersion 9.1.0)}, 2020.

\bibitem{thomas2006analogue}
Hugh Thomas.
\newblock An analogue of distributivity for ungraded lattices.
\newblock {\em Order}, 23(2-3):249--269, 2006.

\bibitem{thomas2019Independence}
Hugh Thomas and Nathan Williams.
\newblock Independence posets.
\newblock {\em J. Comb.}, 10(3):545--578, 2019.

\end{thebibliography}
\label{sec:biblio}

\end{document}